\newcommand{\RP}{\mathbbm{RP}}
\newcommand{\R}{\mathbbm{R}}
\newcommand{\D}{\mathbbm{D}}
\newcommand{\C}{\mathbbm{C}}
\newcommand{\K}{{\mathbbm{K}}}
\newcommand{\s}{\mathbbm{S}}
\newcommand{\B}{\mathbbm{B}}
\newcommand{\cL}{\mathcal L}
\newcommand{\e}{\varepsilon}
\newcommand{\0}{\underline 0}
\newcommand{\inpr}[2]{\ensuremath{\langle#1,#2\rangle}}
\newcommand{\norm}[1]{\lVert #1\rVert}  
\newcommand{\set}[2]{\ensuremath{\{\,{#1}\mid {#2}\,\}}}
\DeclareMathOperator{\Cone}{Cone}
\DeclareMathOperator{\Ima}{Im}
\newtheorem{thm}{Theorem}[section]
\newtheorem{lem}[thm]{Lemma}
\newtheorem{prop}[thm]{Proposition}
\newtheorem{cor}[thm]{Corollary}
\theoremstyle{definition}
\newtheorem{examp}[thm]{Example}
\newtheorem{rem}[thm]{Remark}
\numberwithin{equation}{section}
\begin{document}

\title[Equivalence of Milnor and Milnor-L\^e fibrations]{Equivalence of Milnor and Milnor-L\^e fibrations for real analytic maps}


\author[J.~L.~Cisneros-Molina]{Jos\'e Luis Cisneros-Molina}
\address{Instituto de Matem\'aticas, Unidad Cuernavaca\\ Universidad Nacional Aut\'onoma de M\'exico\\ Avenida Universidad s/n, Colonia Lomas
de Chamilpa\\ Cuernavaca, Morelos, Mexico.}
\curraddr{}
\email{jlcisneros@im.unam.mx}
\thanks{The first author is Regular Associate of the Abdus Salam International Centre for Theoretical Physics, Trieste, Italy. Supported by CONACYT~253506.}

\author[A.~Menegon]{Aur\'elio Menegon}
\address{Universidade Federal da Para\'iba \\ Departamento de Matem\'atica \\ CEP 58051-900, Jo\~ao Pessoa - PB, Brazil}
\curraddr{}
\email{aurelio@mat.ufpb.br}

\subjclass[2010]{Primary 32S55, 58K05, 58K15}

\keywords{Milnor Fibration, Milnor-L\^e Fibration, $d$-regularity, equivalence of fibrations, spherefication}

\date{}

\begin{abstract}
In \cite{Milnor:SPCH} Milnor proved that a real analytic  map $f\colon (\R^n,0) \to (\R^p,0)$, where $n \geq p$, with an isolated critical point at the origin has a 
fibration on the tube $f|\colon \B_\e^n \cap f^{-1}(\s_\delta^{p-1}) \to \s_\delta^{p-1}$. Constructing a vector field such that, (1) it is transverse to the spheres, and (2) it is 
transverse to the tubes, he ``inflates'' the tube to the sphere, to get a fibration $\varphi\colon \s_\e^{n-1} \setminus f^{-1}(0) \to \s^{p-1}$, but the projection is not 
necessarily given by $f/ \|f\|$ as in the complex case.

In the case $f$ has isolated critical value, in \cite{Cisneros-Seade-Snoussi:d-regular} it was proved that if the fibres inside a small tube are transverse to the sphere $\s_\e$, then it has a 
fibration on the tube. Also in \cite{Cisneros-Seade-Snoussi:d-regular}, the concept of $d$-regularity was defined, it turns out that $f$ is $d$-regular if and only if the map $f/ \|f\|\colon \s_\e^{n-1} \setminus 
f^{-1}(0) \to \s^{p-1}$ is a fibre bundle equivalent to the one on the tube. 

In this article, we prove the corresponding facts in a more general setting: if a locally surjective map $f$ has a linear 
discriminant $\Delta$ and a fibration on the tube $f|\colon \B_\e^n \cap f^{-1}(\s_\delta^{p-1} \setminus \Delta) \to \s_\delta^{p-1} \setminus \Delta$, then 
$f$ is $d$-regular if and only if the map $f/ \|f\|\colon \s_\e^{n-1} \setminus f^{-1}(\Delta) \to \s^{p-1} \setminus \mathcal{A}$ (with $\mathcal{A}$ the radial projection of $\Delta$ on $\s^{p-1}$) 
is a fibre bundle equivalent to the one on the tube. We do this by constructing a vector field $\tilde{w}$ which inflates the tube to the sphere 
in a controlled way, it satisfies properties analogous to the vector field constructed by Milnor in the complex setting: besides satisfying (1) and (2) above, it also satisfies that $f/ \|f\|$ is 
constant on the integral curves of $\tilde{w}$.

This is a corrected version of the article published in {\em Internat. J. Math.}, 30(14):1950078, 1-25, 2019, where in the proof of Theorem~3.7 (Theorem~\ref{lem:uni.con.str.0} here)
two inequalities were used that do not hold in general. Such inequalities are not essential for the proof and Theorem~3.7 can be proved without them.
\end{abstract}

\keywords{Milnor Fibration, Milnor-L\^e Fibration, $d$-regularity, equivalence of fibrations, spherefication}

\maketitle

\dedicatory{\textit{Man approaches the unattainable truth through a succession of errors}.\\ \indent Aldous Huxley}

\section*{Preface}

This is a corrected version of the article \cite{Cisneros-Menegon:EMFMLF} published in {\em Internat. J. Math.}, 30(14):1950078, 1-25, 2019, where in the proof of Theorem~3.7 (Theorem~\ref{lem:uni.con.str.0} here)
two inequalities were used that do not hold in general. Such inequalities are not essential for the proof and Theorem~3.7 can be proved without them. 
Here we give such proof, which just needs the complementary Proposition~\ref{prop:keyprop}. This proposition also simplifies the proof of Lemma~\ref{lem:uni.con.str.1}
since it implies that \textbf{Case~2.B} with $\mu(x)<0$ cannot occur, so the the lengthy construction for that case given in the original article is not necessary.
Hence, all the results in the original article \cite{Cisneros-Menegon:EMFMLF} are valid.

We also take the opportunity to add Proposition~\ref{prop:DF}, Corollary~\ref{prop:formula} and Corollary~\ref{prop:sp.rp}, which give a formula for the differential of the spherification map
and some results about it.
They are not neccesary for the results of the article but can be useful and help to describe in a more precise way the vector fields constructed (see Remark~\ref{rem:mu.value}).
An errata has also being sent to the journal.

\section{Introduction}

Milnor Fibration Theorem is an important result in singularity theory. It is about the topology of the fibres of holomorphic functions near its critical points. To each singular point of a complex
hypersurface it associates a fibre bundle, known as the Milnor Fibration. For an overview of its origin, generalizations and connections with other branches of mathematics we recommend 
the recent survey article by Seade \cite{Seade:OMFTIOA50Y}.

Let $f\colon(\C^n,\0) \to (\C,0)$ be a holomorphic  map  with a critical point at the origin $\0 \in \C^n$. The \textit{Milnor Fibration} \cite[Theorem~4.8]{Milnor:SPCH} is given by
\begin{equation}\label{type 1}
\phi := \frac{f}{|f|}:  \s_\e \setminus K  \longrightarrow\s^1\,,
\end{equation}
where $K$ is the \textit{link} of $f$ at $\0$, that is $K =f^{-1}(0) \cap \s_\e$ with $\s_\e$ being a sufficiently small sphere around $\0$.

There is a second fibre bundle called the \textit{Milnor-L\^e Fibration}; this is a fibre bundle on a \textit{Milnor tube} $N(\e,\delta) = \B_\e \cap
f^{-1}(\partial \D_\delta)$ where $\B_\e$ is the ball bounded by $\s_\e$, $\partial \D_\delta$ is the circle bounding the disc $\D_\delta$
of radius $\delta$ centred at the origin of $\C$. When $0<\delta \ll \e$ the map $f$ induces a fibration
\begin{equation}\label{type 2}
f\colon N(\e,\delta) \rightarrow \partial \D_\delta.
\end{equation}
Its existence was proved by Milnor \cite[Theorem~11.2]{Milnor:SPCH} for the case of isolated singularity and by L\^e in \cite[Theorem~(1.1)]{Le1} for the general case, both using Ehresmann Fibration Theorem
for manifolds with boundary.

Fibrations \eqref{type 1} and \eqref{type 2} are equivalent, this is proved using a smooth vector field on $\B_\e\setminus f^{-1}(0)$ constructed by Milnor \cite[Lemma~5.9]{Milnor:SPCH} 
for an arbitrary holomorphic map $f\colon(\C^n,\0) \to (\C,0)$ with the following properties:
\begin{enumerate}
 \item It is transverse to the spheres centred at the origin contained in $\B_\e$,\label{it:tr.sph}
 \item It is transverse to the Milnor tubes,\label{it:tr.tube}
 \item Given an integral curve $p(t)$ of such vector field $\frac{f(p(t))}{\norm{f(p(t))}}$ is constant.\label{it:cons.arg}
\end{enumerate}
With this vector field the Milnor tube can be ``inflated'' to the sphere, i.\ e., following the flow of the vector field one gets a diffeomorphism between the Milnor tube and the complement on the sphere of a
neighbourhood of the link.

Milnor also proved a Fibration Theorem for real singularities \cite[Theorem~2]{Milnor:ISH} or \cite[Theorem~11.2]{Milnor:SPCH}. Let $n \ge p$ and consider  a real analytic  map 
$f\colon (\R^{n}, \0) \to (\R^p,0)$ with an isolated critical point at $\0$. Let $\B_\e^n$ be a ball of radius $\e>0$ sufficiently small and, as before, let $0<\delta \ll \e$.
First, Milnor proved that there is a fibration on the Milnor tube $N(\e,\delta) = \B_\e^n \cap f^{-1}(\s_\delta^{p-1})$ 
\begin{equation}\label{eq:rMLF}
 f\colon N(\e,\delta) \rightarrow \s_\delta^{p-1},
\end{equation}
where $\s_\delta^{p-1}$ is the sphere of radius $\delta$ centred at the origin of $\R^p$.
Then he proved that on the sphere $\s_\e^{n-1}=\partial\B_\e^n$  one has a fibre bundle
\begin{equation}\label{eq:WMF}
\varphi \colon \s_\e^{n-1} \setminus K \to \s^{p-1}
\end{equation}
where $K = f^{-1}(0) \cap \s_\e^{n-1}$ is the link. This was done constructing a vector field on $\B_\e^n\setminus\{0\}$ such that:
\begin{enumerate}
 \item It is transverse to the spheres centred at the origin contained in $\B_\e^n$,
 \item It is transverse to the Milnor tubes,
\end{enumerate}
and ``inflating'' the Milnor tube to the sphere. Milnor pointed out that his real Fibration Theorem has some weaknesses: the condition that $f$ has an isolated critical point is very restrictive, and
since the vector field to ``inflate'' the tube to the sphere does not satisfy property~\eqref{it:cons.arg} as in the complex case, the projection map $\varphi$ is not necessarily given by $\frac{f}{\norm{f}}$.
In fact, in \cite[page~99]{Milnor:SPCH} Milnor gives an example of a map whose fibration on the sphere does not have projection $\frac{f}{\norm{f}}$.

From these remarks two natural questions arose: 
\begin{enumerate}[(i)]
\item When does the fibration on the sphere have projection given by $\frac{f}{\norm{f}}$?\label{it:SMC}
\item Is it possible to relax the condition that $f$ has isolated critical point and still have fibrations on the tube and on the sphere?\label{it:non-iso}
\end{enumerate}
Question \eqref{it:SMC} for $f$ with isolated critical point was studied by several authors \cite{Jacquemard:These,Jacquemard:FMPAR,Seade:OBDAHVF,Ruas-Seade-Verjovsky:RSMF,RS,DosSantos:UMCSMF}.
In \cite{dosSantos:ERMFQHS} dos Santos considered the following question
\begin{enumerate}[(i)]
\setcounter{enumi}{2}
\item If the fibration on the tube and the fibration on the sphere given by $\frac{f}{\norm{f}}$ both exist, are they equivalent?\label{it:q.equiv}
\end{enumerate}
where he answered it positively for $p=2$ and $f$ quasi-homogeneous.

For question \eqref{it:non-iso}, the natural generalization was to consider real analytic  maps $f\colon (\R^{n}, \0) \to (\R^p,0)$ with an \textit{isolated critical value}, as in the case of holomorphic functions. 
There are several works in this direction \cite{Pichon-Seade:FMAfgb,Cisneros-Seade-Snoussi:d-regular,dosSantos-Tibar:RMGHOBS,Massey:RAMFSLI,dosSantos-etal:SOBSFRM}.
To have the fibration \eqref{eq:rMLF} on the tube in a ball $\B_\e^n$ of radius $\e>0$ sufficiently small, it is necessary that $f$ has the \textit{transversality property}: 
there exists a \textit{solid Milnor tube} $\widehat{N}(\e,\delta)= \B_\e^n \cap f^{-1}(\B_\delta^p)$, with $0<\delta \ll \e$, such that all the fibres in the tube are transverse to $\s_\e$, to be able to apply
Ehresmann Fibration Theorem for manifolds with boundary. Examples of maps with the transversality property are maps with the Thom $a_f$ property \cite[Proposition~5.1, Remark~5.7]{Cisneros-Seade-Snoussi:d-regular}.
Having the fibration on the tube, one can use Milnor's vector field to inflate the tube to the sphere to get a fibration on the sphere, but again, not necessarily with projection $\frac{f}{\norm{f}}$.
The reason why the fibration on the sphere of Milnor's example \cite[page~99]{Milnor:SPCH} does not have projection $\frac{f}{\norm{f}}$ is because the map 
\begin{equation}\label{eq:f/nf}
\phi:=\frac{f}{\norm{f}}\colon \s_\e^{n-1} \setminus K \to \s^{p-1}
\end{equation}
is not a submersion, which is a necessary condition to be a smooth fibre bundle. In \cite[Definition~2.4]{Cisneros-Seade-Snoussi:d-regular} 
the concept of \textit{$d$-regularity} was introduced (see Section~\ref{sec:d-reg} below).
In \cite[Proposition~3.2-(4)]{Cisneros-Seade-Snoussi:d-regular} it was proved that the map $f$ is $d$-regular if and only if the map \eqref{eq:f/nf} is a submersion, 
so $d$-regularity is a necessary condition for \eqref{eq:f/nf} to be a fibre bundle. On the other hand, \cite[Lemma~5.2]{Cisneros-Seade-Snoussi:d-regular} states that $f$ is $d$-regular 
if and only if there exists a vector field on $\B_\e^n\setminus f^{-1}(0)$ which satisfies properties~\eqref{it:tr.sph}, \eqref{it:tr.tube} and \eqref{it:cons.arg} above, which implies that ``inflating'' 
the tube to the sphere we have that \eqref{eq:f/nf} is a fibre bundle which is equivalent to the fibre bundle \eqref{eq:rMLF}, so $d$-regularity is a necessary and sufficient condition for \eqref{eq:f/nf} to be
a fibre bundle. Thus, the existence of such vector field answers questions \eqref{it:SMC} and \eqref{it:q.equiv} for $f\colon (\R^{n}, \0) \to (\R^p,0)$ with isolated critical value, 
satisfying the transversality property.

However, Brodersen \cite{HB14} and Hansen \cite{Hansen:MFTRS} pointed out that the argument of the proof of \cite[Lemma~5.2]{Cisneros-Seade-Snoussi:d-regular} is not complete, 
and Hansen in \cite{Hansen:MFTRS} gives sufficient conditions for the existence of such vector field. 
Since then, other authors \cite{Ribeiro,dosSantos-etal:MHSFEP,dosSantos-Ribeiro:GCEMVF} have also given sufficient conditions. 

In this article we prove of the existence of a vector field satisfying properties~\eqref{it:tr.sph}, 
\eqref{it:tr.tube} and \eqref{it:cons.arg} if and only if $f$ is $d$-regular in the more general setting of real analytic maps with linear discriminant considered in \cite{Cisneros-etal:FTDRDMGNICV}.
Let $f\colon (\B_{\e}^{n}, 0) \to (\R^p,0)$ be a real analytic map with linear discriminant $\Delta$, i.~e., $\Delta$ is a union of line segments 
(see Section~\ref{sec:lin.disc} for the precise definition). In order to simplify notation, suppose that $f$ is locally surjective (see Remark \ref{rem:surjective} below).

Suppose that $f$ satisfies the transversality property, then there exists $\delta>0$ sufficiently small such that the restriction of $f$ to the tube
\begin{equation}\label{eq:tub.disc}
f|\colon \B_\e^n \cap f^{-1}(\s_\delta^{p-1} \setminus \mathcal{A}_\delta) \to (\s_\delta^{p-1} \setminus \mathcal{A}_\delta) 
\end{equation}
is a fibre bundle where $\mathcal{A}_\delta:=\Delta\cap\s^{p-1}_\delta$ (see \cite[Theorem~2.7]{Cisneros-etal:FTDRDMGNICV}). 
 
In \cite[Theorem~3.9]{Cisneros-etal:FTDRDMGNICV} it is also proved that if $f$ is $d$-regular then one has a fibre bundle
\begin{equation}\label{eq:sph.disc}
f/ \|f\|\colon (\s_\e^{n-1} \setminus f^{-1}(\Delta)) \to (\s^{p-1} \setminus \mathcal{A})
\end{equation}
where $\mathcal{A}$ is the radial projection of $\mathcal{A}_\delta$ on $\s^{p-1}$.

The aim of the present article is to give a construction of a vector field with properties~\eqref{it:tr.sph}, \eqref{it:tr.tube} and \eqref{it:cons.arg} which works in any open set of $\R^n$
where the maps $f$ and $f/\Vert f\Vert$ are submersions. Applying this construction to maps $f$ with isolated critical value we get a proof of \cite[Lemma~5.2]{Cisneros-Seade-Snoussi:d-regular}
which follows the idea of the original (incomplete) proof. Also this construction of the vector field allows us to prove the equivalence of fibrations \eqref{eq:tub.disc} and \eqref{eq:sph.disc} for
real analytic maps with arbitrary linear discriminant.

\begin{rem}\label{rem:surjective}
Throughout this paper, we will assume that $f$ is \textit{locally surjective}, that is, the image of $f$ contains an open neighbourhood of the origin in $\R^p$,
and we shall not mention it all the time. Nevertheless, it is easy to see that in the general case the same results 
hold if one intersects the bases of the locally trivial fibrations with their image. This choice is to avoid a heavy notation.
\end{rem}

 \section{The canonical pencil, $d$-regularity and the spherefication}\label{sec:d-reg}
 
Let $U$ be an open neighbourhood of $\0 \in \R^n$, $n >p$, and $f\colon(U,\0) \to (\R^p,0)$ a  non-constant analytic map 
with a critical point at $\0\in\R^n$ and $0\in\R^p$ is an isolated critical value. Equip $V=f^{-1}(0)$ with a Whitney
stratification and let $\B_\e^n$ be an open ball in $\R^n$, centred at $\0$, of sufficiently small radius $\e$, so that every sphere
in this ball, centred at $\0$, meets transversely every stratum of $V$, if $V$ is not an isolated point at the origin. Such a ball
exists by \cite[Corollary~2.9]{Milnor:SPCH} if $f$ has an isolated critical point at $\0$, and by the Bertini-Sard theorem in \cite{Verdier:SWTBS} in general.

We define a family of real analytic spaces as follows.  For each $\ell \in\RP^{p-1}$, consider the line $\mathcal L_{\ell} \subset \R^p$
passing through the origin corresponding to $\ell$ and set $$X_{\ell} = \{x \in U \, \vert \, f(x) \in \cL_\ell\}\,.$$
 If we let $\mathcal L_{\ell}^\perp$ be the hyperplane orthogonal to $\mathcal L_{\ell}$, let
$\varpi_{\ell} \colon \R^p \to \cL_{\ell}^\perp$ be the orthogonal projection, and we set $h_{\ell} = \varpi_{\ell} \circ f \,$, then $X_\ell$ is the vanishing set of $h_\ell$, which is real analytic.
Hence $\{X_\ell\}$ is a family of real analytic varieties parameterized by $\RP^{p-1}$.

Notice that the singular points of $X_\ell$ are contained in $V$. Hence each
$X_\ell \setminus V$ is either a smooth submanifold of $U \subset \R^n$ of
dimension $n - p + 1$ or empty. The union of all the $X_\ell$'s is
all of $U$ and one has $V = \cap X_{\ell} = X_{\ell_1} \cap X_{\ell_2}$ for each pair $\ell_1 \ne \ell_2$.

\begin{rem}\label{rem:gluing.links}
We notice  that each $X_\ell$ is naturally the union of three sets: the points
$x \in U$ such that $f(x)=0$, {\it i.e.}, $x\in V$, and the points
$x \in U$ such that $f(x)$ is in one of the two half lines of
 $ \cL_{\ell} \setminus \{0\}$. Write this as:
\begin{equation}\label{eq:decomposition}
X_{\ell} = E_{\ell}^+ \cup V \cup E_{\ell}^- \,.
\end{equation}
\end{rem}

The family $\{X_\ell\, \vert\, \ell \in \RP^{p-1} \}$ is {\it the canonical pencil} of $f$.
Let $\rho$ be a metric in $\R^n$ induced by some positive definite quadratic form.
The map $f$ is said to be \textit{$d$-regular at $\0$ (with respect to the metric $\rho$)} 
if  there exists $\e > 0$ such that every sphere (for the metric $\rho$) of radius
$\le \e$ centred at $\0$ meets every $X_\ell\setminus V$
transversely whenever the intersection is not empty. If the metric is fixed we just say that $f$ is $d$-regular.

In this article we consider $\rho$ as the Euclidean metric in $\R^n$ and we only consider $d$-regular maps with respect to this metric.

\subsection{The spherefication}
\label{sec:spherefication}

Consider the maps  $\Phi\colon U \setminus V\to\s^{p-1}$ and
$\;\mathfrak{F}\colon U \setminus V \to \R^p\setminus\{0\}$ defined by
 \begin{equation}\label{eq:def.g}
 \Phi(x)=\frac{f(x)}{\norm{f(x)}} \quad \hbox{and} \quad\mathfrak{F}(x)=\norm{x}\Phi(x).
\end{equation}
For each $ \ell \in  \RP^{p-1}$   one has that the line $\cL_\ell$
in $\R^p$ meets the sphere $\s^{p-1}$ in two antipodal points, say
$\pm y_\ell$, and one has:
\begin{equation*}
\Phi^{-1}(y_\ell)=E_\ell^+\quad \text{and}\quad \Phi^{-1}(-y_\ell)=E_{\ell}^-,
\end{equation*}
where $E_\ell^+$ (resp. $E_\ell^-$) is the inverse image under $f$ of one of the two half lines in $\cL_\ell \setminus \{0\}$.

Notice that given $y\in \mathcal{L}_\ell \setminus \{0\} \subset
\R^p\setminus\{0\}$, the fibre $\mathfrak{F}^{-1}(y)$ is the
intersection of $E_\ell^\pm$ with the sphere of radius $\norm{y}$
centred at $\0$. Hence, $\mathfrak{F}$ carries spheres in
$\R^n$ of radius $r>0$ into spheres in $\R^p$ of the same radius,
and each $X_\ell$ is also a union of fibres of $\mathfrak{F}$, just
as it is a union of fibres of $f$. 
The analytic map $\mathfrak{F}$ is called the {\it  spherefication} of $f$.

As we mentioned in the Introduction, a map $f$ is $d$-regular if and only if the map $\frac{f}{\norm{f}}$ is a submersion.
This is given by the following proposition together with other characterizations of $d$-regularity in terms of the spherefication.

\begin{prop}[{\cite[Proposition~3.2]{Cisneros-Seade-Snoussi:d-regular}}]\label{prop:equiv}
Let  $f\colon(\B_\e^n ,\0) \to (\R^p,0)$ be a real analytic map with isolated
critical value at the origin. Set $V=f^{-1}(0)$ and
$K_{\e'}=V\cap\s_{\e'}^{n-1}$. The following are equivalent:
\begin{enumerate}[(A)]
\item The map $f$ is $d$-regular at $0$.\label{it:d-r} \item For
each sphere $\s_{\e'}^{n-1}$ in $\R^n$ centred at $\0$ of radius
${\e'} < \e$, the restriction map $\mathfrak{F}_{\e'}\colon
\s_{\e'}^{n-1} \setminus V\to \s^{p-1}_{{\e'}}$ of $\mathfrak{F}$ is
a submersion.\label{it:res-sub} \item The spherefication map
$\mathfrak{F}$ is a submersion at each $x \in \B_\e^n \setminus
V$.\label{it:sp-sub} \item The map $\phi =
\frac{f}{\norm{f}}\colon \s_{{\e'}}^{n-1} \setminus K_{{\e'}}
\longrightarrow\s^{p-1}$ is a submersion for every sphere
$\s_{{\e'}}^{n-1}$ with ${\e'}< \e$.\label{it:phi-sub}
\end{enumerate}
\end{prop}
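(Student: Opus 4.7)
The plan is to establish the cycle $(\text{C}) \Leftrightarrow (\text{B}) \Leftrightarrow (\text{D}) \Leftrightarrow (\text{A})$, exploiting the structural fact that the spherefication $\mathfrak{F}(x) = \norm{x}\,\Phi(x)$ maps the sphere $\s_r^{n-1}$ into the sphere $\s_r^{p-1}$. This ``sphere-preserving'' property lets me decompose the differential of $\mathfrak{F}$ into radial and tangential parts, so that the restricted submersions in (B) and (D) will control the global submersion in (C).

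For $(\text{B}) \Leftrightarrow (\text{C})$, I would compute $D\mathfrak{F}_x$ with respect to the orthogonal decompositions $T_x \R^n = T_x \s_{\norm{x}}^{n-1} \oplus \R\cdot x$ and $T_{\mathfrak{F}(x)}\R^p = T_{\mathfrak{F}(x)}\s_{\norm{x}}^{p-1} \oplus \R\cdot\mathfrak{F}(x)$. Since $\mathfrak{F}(tx) = t\norm{x}\,\Phi(x)$ for $t>0$, differentiating in $t$ at $t=1$ gives $D\mathfrak{F}_x(x) = \mathfrak{F}(x)$, so the radial direction is sent isomorphically to the radial direction in the target. Because $\mathfrak{F}$ preserves spheres, $D\mathfrak{F}_x$ simultaneously maps $T_x \s_{\norm{x}}^{n-1}$ into $T_{\mathfrak{F}(x)}\s_{\norm{x}}^{p-1}$. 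Surjectivity of $D\mathfrak{F}_x$ is therefore equivalent to surjectivity of its restriction $D(\mathfrak{F}_{\norm{x}})_x$ between the sphere tangent spaces, yielding $(\text{B}) \Leftrightarrow (\text{C})$.

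For $(\text{B}) \Leftrightarrow (\text{D})$, on $\s_{\e'}^{n-1}$ one has $\mathfrak{F}_{\e'}(x) = \e'\,\phi(x)$, so the two maps differ only by composition with the rescaling diffeomorphism $y \mapsto y/\e'$ from $\s_{\e'}^{p-1}$ to $\s^{p-1}$, which preserves the submersion property. For $(\text{A}) \Leftrightarrow (\text{D})$, I would first note that since $0$ is the only critical value of $f$ in $\B_\e^n$, the map $f$ is a submersion on $\B_\e^n \setminus V$, and hence $\phi = f/\norm{f}$ is a submersion there, with smooth codimension-$(p-1)$ fibres $\phi^{-1}(\pm y_\ell) = E_\ell^\pm$. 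The standard characterization then gives: $\phi|_{\s_{\e'}^{n-1}}$ is a submersion at $x$ if and only if $\s_{\e'}^{n-1}$ meets $\phi^{-1}(\phi(x)) = E_\ell^\pm \subset X_\ell \setminus V$ transversely at $x$, which is exactly the $d$-regularity condition at $x$.

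The main technical point is the identity $D\mathfrak{F}_x(x) = \mathfrak{F}(x)$ underlying $(\text{B})\Leftrightarrow(\text{C})$; once this radial/tangential splitting is in hand, the remaining equivalences reduce to relabelling targets and invoking the usual submersion-vs-transversality dictionary for restrictions. A secondary subtlety is ensuring that $\phi$ is a submersion on all of $\B_\e^n \setminus V$, which requires the isolated critical value hypothesis on $f$; without it one would not be able to identify the fibres $E_\ell^\pm$ as smooth manifolds on which transversality with spheres makes sense.
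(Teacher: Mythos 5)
Your overall architecture --- (B)$\Leftrightarrow$(C) via the sphere-preserving property of $\mathfrak{F}$, (B)$\Leftrightarrow$(D) by rescaling, (A)$\Leftrightarrow$(D) by the submersion--transversality dictionary --- is the standard and correct route; note that the paper itself does not reprove this proposition (it imports it from \cite{Cisneros-Seade-Snoussi:d-regular}), but its Propositions~\ref{prop:DF} and~\ref{prop:sp.rp} contain precisely the computation your argument needs. The problem is that the step you yourself single out as ``the main technical point'' is false as stated. The identity $\mathfrak{F}(tx)=t\norm{x}\Phi(x)$ asserts $\Phi(tx)=\Phi(x)$, i.e.\ that the direction of $f$ is constant along rays through the origin; this holds for (weighted-)homogeneous maps but not for a general real analytic $f$. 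Consequently $D\mathfrak{F}_x(x)\neq\mathfrak{F}(x)$ in general: by Proposition~\ref{prop:DF},
\begin{equation*}
D\mathfrak{F}_x(x)=\mathfrak{F}(x)-\frac{f(x)}{\norm{f(x)}^2}\inpr{\mathfrak{F}(x)}{Df_x(x)}+\frac{\norm{x}}{\norm{f(x)}}Df_x(x)=\mathfrak{F}(x)+\norm{x}\,D\Phi_x(x),
\end{equation*}
and the spherical term $\norm{x}D\Phi_x(x)$ vanishes only when $Df_x(x)$ is proportional to $f(x)$ (equivalently, when the ray through $x$ is tangent to $E_{\ell_x}$, cf.\ Corollary~\ref{cor:v.El}). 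So the radial direction is \emph{not} sent to the radial direction; its image acquires, in general, a nonzero component tangent to the target sphere.

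Fortunately your block-triangular surjectivity argument does not need the false identity, only two weaker facts that are both true: $D\mathfrak{F}_x$ carries $T_x\s^{n-1}_{\norm{x}}$ into $T_{\mathfrak{F}(x)}\s^{p-1}_{\norm{x}}$ (the radial part of $D\mathfrak{F}_x(v)$ is $\frac{\inpr{x}{v}}{\norm{x}^2}\mathfrak{F}(x)$, which vanishes for $v\perp x$), and the radial component of $D\mathfrak{F}_x(x)$ equals $\mathfrak{F}(x)\neq 0$. With an ``upper-triangular'' matrix whose diagonal radial entry is nonzero, surjectivity of $D\mathfrak{F}_x$ is indeed equivalent to surjectivity of the restriction between sphere tangent spaces, so (B)$\Leftrightarrow$(C) survives once you replace your identity by Proposition~\ref{prop:sp.rp}. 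The remaining pieces are correct: on $\s_{\e'}^{n-1}$ one has $\mathfrak{F}_{\e'}=\e'\phi$, giving (B)$\Leftrightarrow$(D); and since the isolated critical value hypothesis makes $f$, hence $\phi=\pi\circ f$, a submersion on $\B_\e^n\setminus V$ with fibres $E_\ell^{\pm}$, the restriction $\phi|_{\s_{\e'}^{n-1}}$ is a submersion at $x$ exactly when $T_x\s_{\e'}^{n-1}+T_xE_\ell^{\pm}=T_x\R^n$, which is $d$-regularity.
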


The following proposition relates the differential of the
spherefication map $\mathfrak{F}$ and the differential of $f$.

\begin{prop}\label{prop:DF}
Let $x\in U\setminus V$ and $v\in T_x(U\setminus V)$. Then
\begin{equation*}
D\mathfrak{F}_x(v)=\frac{\inpr{x}{v}}{\norm{x}^2}\mathfrak{F}(x)-\frac{f(x)}{\norm{f(x)}^2}\inpr{\mathfrak{F}(x)}{Df_x(v)}+\frac{\norm{x}}{\norm{f(x)}}Df_x(v).
\end{equation*}
\end{prop}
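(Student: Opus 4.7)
The plan is to apply the Leibniz rule to the decomposition $\mathfrak{F}(x)=g(x)\,\Phi(x)$ with the scalar factor $g(x)=\norm{x}$ and the $\R^p$-valued factor $\Phi(x)=f(x)/\norm{f(x)}$, and then rewrite the resulting terms so that $\mathfrak{F}(x)$ itself appears in two of them. Nothing conceptual is required: this is a direct computation, and the only thing to be careful about is keeping track of which objects are scalars versus vectors.

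First I would compute $Dg_x(v)=\inpr{x}{v}/\norm{x}$ by differentiating $\norm{x}=\sqrt{\inpr{x}{x}}$. Next I would differentiate $\Phi=f/\norm{f}$ by the quotient rule, using that $D(\norm{f})_x(v)=\inpr{f(x)}{Df_x(v)}/\norm{f(x)}$, which gives
\begin{equation*}
D\Phi_x(v)=\frac{Df_x(v)}{\norm{f(x)}}-\frac{f(x)}{\norm{f(x)}^3}\inpr{f(x)}{Df_x(v)}.
\end{equation*}
Then the Leibniz rule yields
\begin{equation*}
D\mathfrak{F}_x(v)=\frac{\inpr{x}{v}}{\norm{x}}\Phi(x)+\norm{x}\,D\Phi_x(v).
\end{equation*}

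To match the claimed formula I would rewrite each term so that $\mathfrak{F}(x)=\norm{x}\Phi(x)$ is exposed. The first summand becomes $(\inpr{x}{v}/\norm{x}^2)\,\mathfrak{F}(x)$ after multiplying and dividing by $\norm{x}$. The second piece, $\norm{x}\,Df_x(v)/\norm{f(x)}$, is already in the desired form. For the remaining term I factor as
\begin{equation*}
\norm{x}\cdot\frac{f(x)}{\norm{f(x)}^3}\inpr{f(x)}{Df_x(v)}=\frac{f(x)}{\norm{f(x)}^2}\,\Bigl\langle\tfrac{\norm{x}\,f(x)}{\norm{f(x)}},Df_x(v)\Bigr\rangle=\frac{f(x)}{\norm{f(x)}^2}\inpr{\mathfrak{F}(x)}{Df_x(v)},
\end{equation*}
using the linearity of the inner product in the first slot to absorb the scalar $\norm{x}/\norm{f(x)}$ into the vector $f(x)$. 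Combining the three pieces with the correct signs gives exactly the stated formula.

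There is no real obstacle here; the only minor pitfall is the middle term, where one must recognize that the factor $\norm{x}/\norm{f(x)}$ can be pushed into the inner product to create the vector $\mathfrak{F}(x)$, rather than leaving it as a scalar coefficient in front. The hypothesis $x\in U\setminus V$ ensures $f(x)\ne 0$, so every denominator is nonzero and all operations are justified.
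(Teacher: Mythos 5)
Your computation is correct and is essentially the same argument as the paper's: a direct application of the product and quotient rules to $\mathfrak{F}=\norm{x}\,f/\norm{f}$, followed by regrouping the scalar factors so that $\mathfrak{F}(x)$ appears in the radial and correction terms. The only cosmetic difference is that the paper carries out the identical Leibniz computation coordinate-by-coordinate (grouping the factors as $\tfrac{\norm{x}}{\norm{f(x)}}\cdot f_i(x)$), whereas you work coordinate-free.
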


\begin{proof}
 Set
\begin{align*}
x&=(x_1,\dots,x_n)\in \B_\e^n\setminus V, &
v&=(v_1,\dots,v_n)\in T_x(\B_\e^n\setminus V),\\
w&=(w_1,\dots,w_p)=D\mathfrak{F}_x(v), &
u&=(u_1,\dots,u_p)=Df_x(v).
\end{align*}
Using the quotient rule for the derivative we get
\begin{equation}\label{eq:quot.rule}
 \frac{\partial}{\partial x_j}\biggl(\frac{\norm{x}}{\norm{f(x)}}\biggr)=\frac{x_j}{\norm{x}\norm{f(x)}}
-\frac{\norm{x}}{\norm{f(x)}^3}\biggl(\sum_{l=1}^p f_l(x)\frac{\partial f_l(x)}{\partial x_j} \biggr).
\end{equation}
Write $f(x)=(f_1(x),\dots,f_p(x))$; by the definition of the
spherefication map \eqref{eq:def.g} we have that
\begin{equation*}
 \mathfrak{F}(x)=\bigl(\mathfrak{F}_1(x),\dots,\mathfrak{F}_p(x)\bigr)
=\biggl(\frac{\norm{x}}{\norm{f(x)}}f_1(x),\dots,\frac{\norm{x}}{\norm{f(x)}}f_p(x)\biggr).
\end{equation*}
Using \eqref{eq:quot.rule} and the product rule we obtain
\begin{equation*}
 \frac{\partial\mathfrak{F}_i(x)}{\partial x_j}=\frac{x_jf_i(x)}{\norm{x}\norm{f(x)}}
-\frac{\norm{x}}{\norm{f(x)}^3}\biggl(\sum_{l=1}^p f_l(x)\frac{\partial f_l(x)}{\partial x_j} \biggr)f_i(x)
+\frac{\norm{x}}{\norm{f(x)}}\frac{\partial f_i(x)}{\partial x_j}.
\end{equation*}
Recall that
\begin{equation}\label{eq:ui}
 u_i=\sum_{j=1}^{n}\frac{\partial f_i(x)}{\partial x_j}v_j.
\end{equation}
Then using \eqref{eq:ui} we have:
\begin{align}
 w_i&=\sum_{j=1}^{n}\frac{\partial \mathfrak{F}_i(x)}{\partial x_j}v_j\notag\\
&=\sum_{j=1}^{n}\left[\frac{f_i(x)x_jv_j}{\norm{x}\norm{f(x)}}
-\frac{\norm{x}}{\norm{f(x)}^3}\biggl(\sum_{l=1}^p f_l(x)\frac{\partial f_l(x)}{\partial x_j} \biggr)f_i(x)v_j
+\frac{\norm{x}}{\norm{f(x)}}\frac{\partial f_i(x)}{\partial x_j}v_j \right]\notag\\
&=\frac{f_i(x)}{\norm{x}\norm{f(x)}}\sum_{j=1}^{n}x_jv_j
-\frac{\norm{x}}{\norm{f(x)}^3}f_i(x)\sum_{l=1}^p f_l(x)\sum_{j=1}^{n}\frac{\partial f_l(x)}{\partial x_j}v_j
+\frac{\norm{x}}{\norm{f(x)}}u_i\notag\\
&=\frac{\inpr{x}{v}}{\norm{x}\norm{f(x)}}f_i(x)
-\frac{\norm{x}}{\norm{f(x)}^3}f_i(x)\sum_{l=1}^pf_l(x)u_l
+\frac{\norm{x}}{\norm{f(x)}}u_i\notag\\
&=\frac{\inpr{x}{v}}{\norm{x}\norm{f(x)}}f_i(x)
-\frac{\norm{x}}{\norm{f(x)}^3}f_i(x)\inpr{f(x)}{Df_x(v)}
+\frac{\norm{x}}{\norm{f(x)}}u_i\notag\\
&=\frac{\inpr{x}{v}}{\norm{x}\norm{f(x)}}f_i(x)
-\frac{f_i(x)}{\norm{f(x)}^2}\inpr{\mathfrak{F}(x)}{Df_x(v)}
+\frac{\norm{x}}{\norm{f(x)}}u_i\,.\label{eq:DF.coor}
\end{align}
Hence the formula of the proposition follows.
\end{proof}

\begin{cor}\label{prop:formula}
Let $x\in U\setminus V$ and $v\in T_x(U\setminus V)$. Then
\begin{equation*}
\norm{D\mathfrak{F}_x(v)}^2=\frac{\inpr{x}{v}^2}{\norm{x}^2}+\frac{\norm{x}^2 \norm{Df_x(v)}^2-\inpr{\mathfrak{F}(x)}{Df_x(v)}^2}{\norm{f(x)}^2}\,.
\end{equation*}
\end{cor}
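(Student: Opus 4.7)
The plan is to use Proposition~\ref{prop:DF} and compute the squared norm directly, organizing the computation around an orthogonal decomposition that eliminates cross terms. The key observation is that $\mathfrak{F}(x)$ and $f(x)$ are parallel (both are positive multiples of the unit vector $\Phi(x)$), so $\inpr{\mathfrak{F}(x)}{f(x)}=\norm{x}\norm{f(x)}$ and $f(x)=\tfrac{\norm{f(x)}}{\norm{x}}\mathfrak{F}(x)$, while $\norm{\mathfrak{F}(x)}=\norm{x}$.

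First I would rewrite the formula of Proposition~\ref{prop:DF} as
\begin{equation*}
D\mathfrak{F}_x(v)=\frac{\inpr{x}{v}}{\norm{x}^2}\,\mathfrak{F}(x)+R,\qquad R:=-\frac{\inpr{\mathfrak{F}(x)}{Df_x(v)}}{\norm{f(x)}^2}f(x)+\frac{\norm{x}}{\norm{f(x)}}Df_x(v).
\end{equation*}
Next I would check that $R\perp\mathfrak{F}(x)$: using $\inpr{f(x)}{\mathfrak{F}(x)}=\norm{x}\norm{f(x)}$ one finds
\begin{equation*}
\inpr{R}{\mathfrak{F}(x)}=-\frac{\norm{x}}{\norm{f(x)}}\inpr{\mathfrak{F}(x)}{Df_x(v)}+\frac{\norm{x}}{\norm{f(x)}}\inpr{\mathfrak{F}(x)}{Df_x(v)}=0.
\end{equation*}

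Then the Pythagorean theorem gives
\begin{equation*}
\norm{D\mathfrak{F}_x(v)}^2=\frac{\inpr{x}{v}^2}{\norm{x}^4}\norm{\mathfrak{F}(x)}^2+\norm{R}^2=\frac{\inpr{x}{v}^2}{\norm{x}^2}+\norm{R}^2,
\end{equation*}
which already produces the first summand in the corollary. For $\norm{R}^2$ I would expand the square of the two-term vector $R$: the cross term, after replacing $f(x)$ by $\tfrac{\norm{f(x)}}{\norm{x}}\mathfrak{F}(x)$ in $\inpr{f(x)}{Df_x(v)}$, becomes $-\tfrac{2\inpr{\mathfrak{F}(x)}{Df_x(v)}^2}{\norm{f(x)}^2}$, which combines with the diagonal contribution $\tfrac{\inpr{\mathfrak{F}(x)}{Df_x(v)}^2}{\norm{f(x)}^2}$ from the first vector to give $-\tfrac{\inpr{\mathfrak{F}(x)}{Df_x(v)}^2}{\norm{f(x)}^2}$, while the second vector contributes $\tfrac{\norm{x}^2\norm{Df_x(v)}^2}{\norm{f(x)}^2}$. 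Adding these two pieces yields exactly the second summand of the claimed formula.

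There is no real obstacle here: the statement follows from a bookkeeping calculation. The only temptation to avoid is expanding $\norm{D\mathfrak{F}_x(v)}^2$ as a full six-term sum from Proposition~\ref{prop:DF}; the orthogonal decomposition above collapses the computation into two clean pieces and makes the cancellation of cross terms transparent.
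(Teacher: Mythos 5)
Your proof is correct. It is worth noting, though, that it is organized differently from the paper's argument. The paper squares the coordinate expression \eqref{eq:DF.coor} for $w_i$ term by term, obtaining a six-term sum for $w_i^2$, and leaves the summation over $i$ (and the resulting cancellations) to the reader. You instead work with the vector formula of Proposition~\ref{prop:DF}, split $D\mathfrak{F}_x(v)$ into the multiple of $\mathfrak{F}(x)$ and a remainder $R$, verify $\inpr{R}{\mathfrak{F}(x)}=0$, and apply the Pythagorean identity together with $\norm{\mathfrak{F}(x)}=\norm{x}$. This is precisely the radial/spherical orthogonal decomposition that the paper only formalizes afterwards in Proposition~\ref{prop:sp.rp}, so your argument in effect proves that proposition first and then derives the corollary from it. The two computations are algebraically equivalent, but yours makes the cancellation of the cross terms transparent and replaces the coordinate bookkeeping with a two-piece orthogonal expansion; the paper's version is more elementary in that it uses nothing beyond the chain rule already deployed in Proposition~\ref{prop:DF}. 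All the identities you use ($\inpr{f(x)}{\mathfrak{F}(x)}=\norm{x}\norm{f(x)}$, $f(x)=\tfrac{\norm{f(x)}}{\norm{x}}\mathfrak{F}(x)$, $\norm{\mathfrak{F}(x)}=\norm{x}$) follow immediately from the definition \eqref{eq:def.g}, and the final assembly of $\norm{R}^2$ checks out.
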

\begin{proof}
From \eqref{eq:DF.coor} we have that
\begin{align*}
 w_i^2&=\frac{\inpr{x}{v}^2}{\norm{x}^2\norm{f(x)}^2}f_i(x)^2+\frac{f_i(x)^2}{\norm{f(x)}^4}\inpr{\mathfrak{F}(x)}{Df_x(v)}^2
+\frac{\norm{x}^2}{\norm{f(x)}^2}u_i^2\\
&\qquad-2\frac{\inpr{x}{v}}{\norm{x}\norm{f(x)}^3}\inpr{\mathfrak{F}(x)}{Df_x(v)}f_i(x)^2+2\frac{\inpr{x}{v}}{\norm{f(x)}^2}f_i(x)u_i\\
&\qquad-2\frac{\norm{x}}{\norm{f(x)}^3}\inpr{\mathfrak{F}(x)}{Df_x(v)}f_i(x)u_i.
\end{align*}
Therefore the corollary follows.
\end{proof}

Let $y\in\R^p$ and let $\s^{p-1}_{\norm{y}}$ be the sphere in $\R^p$ with centre at the origin which contains $y$.
Let $\ell_y$ be the line through the origin in $\R^p$ generated by $y$. Hence, the tangent space $T_{y}\R^p$ decomposes as the direct sum of the orthogonal subspaces
\begin{equation*}
T_{y}\R^p=T_{y}\s^{p-1}_{\norm{y}}\oplus \ell_y.
\end{equation*}
Thus, any vector $u\in T_{y}\R^p$ can be written in a unique way as $u=s+r$, with $s\in T_{y}\R^p$ and $r\in \ell_y$. We call $s$ and $r$ respectively,  the \textit{spherical} and \textit{radial parts} of $u$.

\begin{prop}\label{prop:sp.rp}
Let $x\in U\setminus V$, $v\in T_xU$ and consider $D\mathfrak{F}_x(v)$. Then
\begin{enumerate}[I)]
 \item $\frac{\inpr{x}{v}}{\norm{x}^2}\mathfrak{F}(x)$ is the radial part of $D\mathfrak{F}_x(v)$.\label{it:radial}
 \item $-\frac{f(x)}{\norm{f(x)}^2}\inpr{\mathfrak{F}(x)}{Df_x(v)}+\frac{\norm{x}}{\norm{f(x)}}Df_x(v)$ is the spherical part of $D\mathfrak{F}_x(v)$.\label{it:spherical}
\end{enumerate}
\end{prop}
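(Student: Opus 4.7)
The plan is to use the formula already established in Proposition~\ref{prop:DF} and just verify the orthogonality of the second candidate vector to the radial direction. First I would observe that by the definition \eqref{eq:def.g} one has
\begin{equation*}
\mathfrak{F}(x)=\frac{\norm{x}}{\norm{f(x)}}f(x),
\end{equation*}
so at the point $y=\mathfrak{F}(x)$ the line $\ell_y$ is spanned equally well by $\mathfrak{F}(x)$ or by $f(x)$. Hence testing spherical-ness of a vector $w\in T_y\R^p$ reduces to checking $\inpr{w}{f(x)}=0$, and being radial means being a scalar multiple of $\mathfrak{F}(x)$.

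Next I would split $D\mathfrak{F}_x(v)$ using Proposition~\ref{prop:DF} into three summands and note that the first, $\frac{\inpr{x}{v}}{\norm{x}^2}\mathfrak{F}(x)$, is visibly radial. By uniqueness of the orthogonal decomposition $T_y\R^p=T_y\s^{p-1}_{\norm{y}}\oplus\ell_y$, part~\ref{it:radial} will follow automatically from part~\ref{it:spherical}, so it suffices to show that
\begin{equation*}
w:=-\frac{f(x)}{\norm{f(x)}^2}\inpr{\mathfrak{F}(x)}{Df_x(v)}+\frac{\norm{x}}{\norm{f(x)}}Df_x(v)
\end{equation*}
is orthogonal to $f(x)$.

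The computation is then immediate: using $\inpr{f(x)}{f(x)}=\norm{f(x)}^2$ and $\mathfrak{F}(x)=\frac{\norm{x}}{\norm{f(x)}}f(x)$,
\begin{equation*}
\inpr{w}{f(x)}=-\inpr{\mathfrak{F}(x)}{Df_x(v)}+\frac{\norm{x}}{\norm{f(x)}}\inpr{f(x)}{Df_x(v)}=0,
\end{equation*}
so $w\in T_{\mathfrak{F}(x)}\s^{p-1}_{\norm{x}}$ and we are done.

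There is no real obstacle here; the only thing to be careful about is remembering that $\mathfrak{F}(x)$ is a positive scalar multiple of $f(x)$, so testing orthogonality against either vector gives the same condition, and then invoking uniqueness of the orthogonal decomposition to get part~\ref{it:radial} at no extra cost.
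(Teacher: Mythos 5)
Your proposal is correct and follows essentially the same route as the paper: both rely on the decomposition of $D\mathfrak{F}_x(v)$ from Proposition~\ref{prop:DF}, note the first summand is a multiple of $\mathfrak{F}(x)$, and verify the remaining term is orthogonal to $f(x)$ by the same two-line inner-product computation. Your explicit appeal to uniqueness of the orthogonal decomposition is a slightly more careful phrasing of what the paper leaves implicit, but it is not a different argument.
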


\begin{proof}
Part \ref{it:radial}) is obvious since it is a multiple of $\mathfrak{F}(x)$. For part \ref{it:spherical}) it is enough to see that its inner product with $f(x)$ vanishes.
\begin{multline*}
-\inpr{f(x)}{\frac{f(x)}{\norm{f(x)}^2}\inpr{\mathfrak{F}(x)}{Df_x(v)}}+\inpr{f(x)}{\frac{\norm{x}}{\norm{f(x)}}Df_x(v)}=\\
-\inpr{\mathfrak{F}(x)}{Df_x(v)}+\inpr{\frac{\norm{x}}{\norm{f(x)}}f(x)}{Df_x(v)}=0.
\end{multline*}
\end{proof}

Proposition~\ref{prop:sp.rp} gives a criterium for a vector $v\in T_x\B_\e^n$ to be tangent to the corresponding $E_\ell^\pm$

\begin{cor}\label{cor:v.El}
Let $x\in U\setminus V$ and $v\in T_xU$. Then $v\in T_x E_{\ell_{\mathfrak{F}(x)}}^\pm$ if and only if
\begin{equation*}
 Df_x(v)=\frac{f(x)}{\norm{f(x)}^2}\inpr{f(x)}{Df_x(v)}.
\end{equation*}
\end{cor}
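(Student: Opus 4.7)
The plan is to characterize membership in $T_x E_{\ell_{\mathfrak{F}(x)}}^\pm$ as the vanishing of the spherical part of $D\mathfrak{F}_x(v)$, and then apply Proposition~\ref{prop:sp.rp} part~\ref{it:spherical}).

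First I would observe that, by the formulas of Section~\ref{sec:spherefication}, one has $E_{\ell_{\mathfrak{F}(x)}}^\pm = \Phi^{-1}(\pm \mathfrak{F}(x)/\norm{\mathfrak{F}(x)})$, so $v \in T_x E_{\ell_{\mathfrak{F}(x)}}^\pm$ if and only if $D\Phi_x(v)=0$. Writing $\mathfrak{F}(x)=\norm{x}\Phi(x)$ and differentiating gives
\begin{equation*}
D\mathfrak{F}_x(v) = \frac{\inpr{x}{v}}{\norm{x}}\Phi(x)+\norm{x}\,D\Phi_x(v),
\end{equation*}
and since $\Phi$ takes values in $\s^{p-1}$, the vector $D\Phi_x(v)$ is orthogonal to $\Phi(x)$, hence to $\mathfrak{F}(x)$. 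Matching this with the decomposition in Proposition~\ref{prop:sp.rp} identifies $\norm{x}\,D\Phi_x(v)$ with the spherical part of $D\mathfrak{F}_x(v)$. Consequently, $v\in T_x E_{\ell_{\mathfrak{F}(x)}}^\pm$ if and only if the spherical part of $D\mathfrak{F}_x(v)$ vanishes.

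Next, I would invoke Proposition~\ref{prop:sp.rp}\ref{it:spherical}) and set the spherical part equal to zero:
\begin{equation*}
\frac{\norm{x}}{\norm{f(x)}}Df_x(v)=\frac{f(x)}{\norm{f(x)}^2}\inpr{\mathfrak{F}(x)}{Df_x(v)}.
\end{equation*}
Substituting $\mathfrak{F}(x)=\tfrac{\norm{x}}{\norm{f(x)}}f(x)$ gives $\inpr{\mathfrak{F}(x)}{Df_x(v)}=\tfrac{\norm{x}}{\norm{f(x)}}\inpr{f(x)}{Df_x(v)}$; substituting and cancelling the common positive factor $\norm{x}/\norm{f(x)}$ yields the stated identity. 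The converse direction is the same computation read in reverse.

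There is no real obstacle: the only conceptual point is the identification of $E_\ell^\pm$ as a fibre of $\Phi$, which turns the tangency condition into the vanishing of the spherical component of $D\mathfrak{F}_x(v)$. After that the corollary is a one-line manipulation of the formula in Proposition~\ref{prop:sp.rp}.
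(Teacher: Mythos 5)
Your argument is correct and follows essentially the same route as the paper: the paper's proof likewise reduces the tangency condition to the vanishing of the spherical part of $D\mathfrak{F}_x(v)$ and then reads off the identity from Proposition~\ref{prop:sp.rp}-\ref{it:spherical}). The only difference is that you justify the reduction explicitly (via $E_{\ell_{\mathfrak{F}(x)}}^\pm$ being a fibre of the submersion $\Phi$ and the splitting $D\mathfrak{F}_x(v)=\frac{\inpr{x}{v}}{\norm{x}}\Phi(x)+\norm{x}D\Phi_x(v)$), which the paper simply asserts.
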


\begin{proof}
We have that $v\in T_x E_{\ell_{\mathfrak{F}(x)}}^\pm$ if and only if the spherical part of $D_x\mathfrak{F}(v)$ is zero. Then the formula follows
from Proposition~\ref{prop:sp.rp}-\ref{it:spherical}.
\end{proof}

\section{Construction of a vector field satisfying \eqref{it:tr.sph}, \eqref{it:tr.tube} and \eqref{it:cons.arg}}

In this section we construct a vector field satisfying \eqref{it:tr.sph}, \eqref{it:tr.tube} and \eqref{it:cons.arg}. First in Subsection~\ref{ssec:lifting} we prove some auxiliary lemmas, 
in particular, a lemma about liftings to $\R^n$ of the gradient vector field of a function $g\colon\R^p\to\R$ via a submersion $f\colon\R^n\to\R^p$. Then, in Subsection~\ref{ssec:construction} 
we apply the lemmas to a real analytic $d$-regular map $f\colon\R^n\to\R^p$ with isolated critical point, to construct the aforementioned vector field.

\subsection{Lifting of gradients}\label{ssec:lifting}

Consider open subsets $U\subset\R^n$ and $U'\subset\R^p$. Let $f\colon U\to U'$ and $g\colon U'\to\R$ be submersions. 
Consider the function $h=g\circ f\colon U\to\R$ which is a submersion, being the composition of two submersions.
Let $\nabla g$ and $\nabla h$ be the gradients of $g$ and $h$ respectively, which are non zero everywhere.
Let $x\in U$ and let $t=h(x)=g(f(x))$, we have that $t$ is a regular value of $g$ and $h$, so $N=g^{-1}(t)$ and $M=f^{-1}(N)=h^{-1}(t)$ are submanifolds of codimension $1$ of $U'$ and $U$ respectively. 
Since $f$ is a submersion, it is transverse to $N$ and therefore we also have $M=f^{-1}(N)=h^{-1}(t)$ and
\begin{equation}\label{eq:tang}
 T_xM=Df_x^{-1}(T_{f(x)}N).
\end{equation}

\begin{lem}\label{lem:gradients}
Let $x\in U$. Then there exists a unique vector $v(x)\in T_xM$ orthogonal to $\ker Df_x$ such that
\begin{equation}\label{eq:goal}
Df_x(v(x)+\alpha(x)\nabla h(x))=\nabla g(f(x)).
\end{equation}
for some positive function $\alpha\colon U\to\R_+$.
\end{lem}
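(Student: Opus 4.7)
The plan is to exploit the adjoint $(Df_x)^{*}\colon\R^p\to\R^n$ of $Df_x$, which identifies $\nabla h(x)$ as an element of the orthogonal complement of $\ker Df_x$. Since $h=g\circ f$, for every $w\in T_x\R^n$ we have
\begin{equation*}
\inpr{\nabla h(x)}{w}=Dh_x(w)=Dg_{f(x)}(Df_x(w))=\inpr{\nabla g(f(x))}{Df_x(w)},
\end{equation*}
which gives $\nabla h(x)=(Df_x)^{*}(\nabla g(f(x)))$. In particular $\nabla h(x)\in(\ker Df_x)^{\perp}$. Because $f$ is a submersion, $Df_x$ restricted to $(\ker Df_x)^{\perp}$ is a linear isomorphism onto $T_{f(x)}\R^p$, so any vector of $(\ker Df_x)^{\perp}$ is determined by its image under $Df_x$.

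First I would reformulate the problem: I want to produce $v(x)\in T_xM\cap(\ker Df_x)^{\perp}$ and $\alpha(x)>0$ such that
\begin{equation*}
Df_x(v(x))=\nabla g(f(x))-\alpha(x)\,Df_x(\nabla h(x)).
\end{equation*}
Once $\alpha(x)$ is fixed, the right-hand side lies in $T_{f(x)}\R^p$, and since $\nabla h(x)$ is already in $(\ker Df_x)^{\perp}$, the vector $v(x)+\alpha(x)\nabla h(x)$ is the unique preimage inside $(\ker Df_x)^{\perp}$ under $Df_x$; subtracting $\alpha(x)\nabla h(x)$ yields a $v(x)$ in $(\ker Df_x)^{\perp}$.

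Next I would pin down $\alpha(x)$ by imposing $v(x)\in T_xM$. By \eqref{eq:tang}, this means $Df_x(v(x))\in T_{f(x)}N$, i.e.\ $\inpr{Df_x(v(x))}{\nabla g(f(x))}=0$. Taking the inner product of the displayed equation above with $\nabla g(f(x))$ and using the adjoint identity $\inpr{Df_x(\nabla h(x))}{\nabla g(f(x))}=\inpr{\nabla h(x)}{(Df_x)^{*}\nabla g(f(x))}=\norm{\nabla h(x)}^2$, one gets
\begin{equation*}
0=\norm{\nabla g(f(x))}^2-\alpha(x)\norm{\nabla h(x)}^2,
\end{equation*}
so I must set $\alpha(x)=\norm{\nabla g(f(x))}^2/\norm{\nabla h(x)}^2$. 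Since $g$ and $h$ are submersions, both gradients are nonzero, so $\alpha(x)$ is a well-defined positive smooth function of $x$.

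Finally I would verify that this $\alpha(x)$ and the resulting $v(x)$ satisfy all the required conditions: $v(x)\in(\ker Df_x)^{\perp}$ by construction, $\alpha(x)>0$ by the previous formula, $v(x)\in T_xM$ because the above inner product vanishes, and \eqref{eq:goal} holds by construction. Uniqueness follows because $\alpha(x)$ is forced by the orthogonality condition and $v(x)$ is then forced by the injectivity of $Df_x$ on $(\ker Df_x)^{\perp}$. I do not expect a serious obstacle here; the only subtle point is recognizing $\nabla h(x)=(Df_x)^{*}\nabla g(f(x))$, which is what forces the system to be compatible and simultaneously yields the positivity of $\alpha(x)$.
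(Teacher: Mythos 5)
Your proof is correct and follows essentially the same route as the paper's: both hinge on the identity $\nabla h(x)=(Df_x)^{*}\nabla g(f(x))$ (the chain rule) and on $Df_x$ being an isomorphism from $(\ker Df_x)^{\perp}$ onto $T_{f(x)}\R^p$, and both arrive at the same forced value $\alpha(x)=\norm{\nabla g(f(x))}^2/\norm{\nabla h(x)}^2$ by pairing \eqref{eq:goal} with $\nabla g(f(x))$. The only difference is organizational: the paper first produces $\alpha(x)=1/\lambda(x)$ from the decomposition $Df_x(\nabla h(x))=u(x)+\lambda(x)\nabla g(f(x))$ and checks positivity afterwards, whereas you solve for $\alpha(x)$ up front, which yields positivity immediately.
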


\begin{proof}
The gradient $\nabla g(f(x))$ is normal to $T_{f(x)} N$. Denote by $G$ the line in $T_{f(x)} U'$ generated by $\nabla g(f(x))$. Analogously the gradient $\nabla h(x)$ is normal to $T_xM$. 
Denote by $H$ the line in $T_xU$ generated by $\nabla h(x)$. Hence, we have the following isomorphisms
\begin{equation*}
T_x U \simeq T_x M \oplus H
\end{equation*}
and
\begin{equation*}
T_{f(x)} U' \simeq T_{f(x)} N \oplus G.
\end{equation*}

On the other hand, we have that $\ker Df_x = T_x f^{-1}(f(x)) \subset T_xM$, so there is an isomorphism:
\begin{equation*}
T_x U \cong T_x f^{-1}(f(x)) \oplus \tilde{M} \oplus H
\end{equation*}
where $\tilde{M}$ denotes the orthogonal complement of $T_x f^{-1}(f(x))$ in $T_xM$. Hence $Df_x$ induces an isomorphism:
\begin{equation}\label{eq:iso1}
Df_x|_{ \tilde{M} \oplus H}\colon \tilde{M} \oplus H \to T_{f(x)}N \oplus G.
\end{equation}
By \eqref{eq:tang} $Df_x|_{ \tilde{M}}\colon \tilde{M}\to T_{f(x)}N$ is an isomorphism, therefore $Df_x(\nabla h(x))$ has the form
\begin{equation}
 Df_x(\nabla h(x))=u(x)+\lambda(x) \nabla g(f(x)),
\end{equation}
with $u(x)\in T_{f(x)} N$ and for some $\lambda(x)\neq0$ (otherwise \eqref{eq:iso1} would not be an isomorphism). Set $\alpha(x)=\frac{1}{\lambda(x)}$, by the isomorphism \eqref{eq:iso1} there exist
a unique vector $v(x)$ in $\tilde{M}$ such that $Df_x(v(x))=-\alpha(x) u(x)$, then
\begin{equation*}
Df_x(\alpha(x)\nabla h(x)+v(x))=\frac{1}{\lambda(x)}u(x)+\nabla g(f(x))-\frac{1}{\lambda(x)}u(x)=\nabla g(f(x)).
\end{equation*}

It only remains to check that $\alpha(x)>0$ for every $x\in U$. By the chain rule we have that
\begin{equation}\label{eq:c.r}
 \nabla h(x)=\nabla g(f(x)) \cdot Df_x.
\end{equation}
Taking the inner product with $\nabla g(f(x))$ in \eqref{eq:goal} we have
\begin{align*}
\nabla g(f(x))\cdot Df_x(v(x)+\alpha(x)\nabla h(x))&= \nabla g(f(x))\cdot\nabla g(f(x))\\
\nabla g(f(x))\cdot \bigl(-\alpha(x) u(x)\bigr)+\alpha(x)\nabla g(f(x))\cdot Df_x(\nabla h(x))&= \nabla g(f(x))\cdot\nabla g(f(x))\\
\alpha(x)\nabla h(x)\cdot \nabla h(x)&=\norm{\nabla g(f(x))}^2,\qquad\text{by \eqref{eq:c.r},}\\
\alpha(x)\norm{\nabla h(x)}^2&=\norm{\nabla g(f(x))}^2,
\end{align*}
therefore
\begin{equation}\label{eq:alpha.value}
 \alpha(x)=\frac{\norm{\nabla g(f(x))}^2}{\norm{\nabla h(x)}^2}>0.
\end{equation}
\end{proof}

\begin{cor}\label{cor:lift}
Let $f\colon U\to U'$, $g\colon U'\to\R$ and $h=g\circ f$ be as in Lemma~\ref{lem:gradients}. Let $x\in U$ and let $L_x$ be a subspace of $T_xM$ of codimension $1$ in $T_xM$,  
which is transverse to $T_xf^{-1}(f(x))$ in $T_xM$.
Then there exists a unique vector $\bar{v}(x)\in L_x$ orthogonal to $L_x\cap T_x f^{-1}(f(x))$ such that
\begin{equation}
Df_x(\bar{v}(x)+\alpha(x)\nabla h(x))=\nabla g(f(x)).
\end{equation}
for some positive function $\alpha\colon U\to\R_+$.
\end{cor}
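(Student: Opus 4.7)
The plan is to mimic the proof of Lemma~\ref{lem:gradients}, replacing the full tangent space $T_xM$ with the prescribed codimension-one subspace $L_x$. First I would analyze the dimensions. Since $L_x$ is transverse to $K := T_x f^{-1}(f(x))$ inside $T_xM$ and has codimension $1$ in $T_xM$, we get $L_x + K = T_xM$ and, from the dimension formula,
\begin{equation*}
\dim(L_x \cap K) = \dim L_x + \dim K - \dim T_xM = (n-2) + (n-p) - (n-1) = n-p-1,
\end{equation*}
so $L_x \cap K$ is a hyperplane in $L_x$. Let $\tilde{L}_x$ denote the orthogonal complement of $L_x \cap K$ inside $L_x$; it has dimension $p-1$, the same as $T_{f(x)} N$.

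Next I would verify that $Df_x|_{\tilde L_x}\colon \tilde L_x \to T_{f(x)} N$ is an isomorphism. Injectivity is automatic: the kernel would be $\tilde L_x \cap K \subseteq \tilde L_x \cap (L_x \cap K) = \{0\}$. Since source and target have the same dimension, it is an isomorphism. This is the analogue of the isomorphism $Df_x|_{\tilde{M}}\colon \tilde{M}\to T_{f(x)}N$ used in Lemma~\ref{lem:gradients}.

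With this in hand, I would reproduce verbatim the rest of the argument of the lemma. Decompose
\begin{equation*}
Df_x(\nabla h(x)) = u(x) + \lambda(x)\,\nabla g(f(x))
\end{equation*}
with $u(x) \in T_{f(x)} N$ and $\lambda(x) \in \R$. The chain rule $\nabla h(x) = Df_x^{T}(\nabla g(f(x)))$ yields
\begin{equation*}
\inpr{Df_x(\nabla h(x))}{\nabla g(f(x))} = \inpr{\nabla h(x)}{\nabla h(x)} = \norm{\nabla h(x)}^2 > 0,
\end{equation*}
so $\lambda(x) > 0$. Set $\alpha(x) = 1/\lambda(x) > 0$ and let $\bar{v}(x) \in \tilde L_x$ be the unique vector with $Df_x(\bar v(x)) = -\alpha(x)\, u(x)$, whose existence and uniqueness are guaranteed by the isomorphism of the previous paragraph. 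A direct substitution then gives
\begin{equation*}
Df_x(\bar v(x) + \alpha(x)\nabla h(x)) = -\alpha(x) u(x) + \alpha(x)\bigl(u(x) + \lambda(x)\nabla g(f(x))\bigr) = \nabla g(f(x)),
\end{equation*}
which is the desired identity. Uniqueness of $\bar v(x)$ in $L_x$ orthogonal to $L_x \cap K$ follows because any such vector lies in $\tilde L_x$, and the equation determines its image under the isomorphism $Df_x|_{\tilde L_x}$.

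There is no real obstacle here beyond the bookkeeping: the only substantive ingredient is verifying that the transversality hypothesis $L_x \pitchfork K$ inside $T_xM$, combined with $\operatorname{codim}(L_x, T_xM) = 1$, forces $L_x \cap K$ to have precisely the right codimension in $L_x$ for the isomorphism $Df_x|_{\tilde L_x}\colon \tilde L_x \to T_{f(x)}N$ to hold. Once that is in place, the scalar $\alpha(x) = \norm{\nabla g(f(x))}^2/\norm{\nabla h(x)}^2$ is the same as in \eqref{eq:alpha.value}, so in particular the formula is \emph{independent} of the choice of $L_x$, which is the feature that makes the corollary useful in the subsequent construction of the vector field.
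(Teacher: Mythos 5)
Your proof is correct and follows essentially the same route as the paper's: both arguments rest on the dimension count $\dim\bigl(L_x\cap T_xf^{-1}(f(x))\bigr)=n-p-1$ and identify the orthogonal complement of $L_x\cap T_xf^{-1}(f(x))$ in $L_x$ as a $(p-1)$-dimensional space that $Df_x$ carries isomorphically onto $T_{f(x)}N$. The only difference is presentational: the paper obtains $\bar v(x)$ by lifting the vector $v(x)$ of Lemma~\ref{lem:gradients} through the projection onto $\tilde M$ (writing $\bar v(x)=k(x)+v(x)$ with $k(x)\in\ker Df_x$), whereas you rerun the decomposition of $Df_x(\nabla h(x))$ directly inside $\tilde L_x$; by uniqueness the two constructions produce the same vector and the same $\alpha(x)=\norm{\nabla g(f(x))}^2/\norm{\nabla h(x)}^2$.
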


\begin{proof}
We have the isomorphism
\begin{equation}\label{eq:M=F+tM}
T_xM\cong T_x f^{-1}(f(x)) \oplus \tilde{M}
\end{equation}
where $\tilde{M}$ denotes the orthogonal complement of $T_x f^{-1}(f(x))$ in $T_xM$. Consider the projection
\begin{equation}
\pi\colon T_x f^{-1}(f(x)) \oplus \tilde{M}\to \tilde{M},
\end{equation}
and its restriction to $L_x$
\begin{equation}\label{eq:p.L}
\pi|_{L_x}\colon L_x\to \tilde{M}. 
\end{equation}
We claim that \eqref{eq:p.L} is onto. Recall that $\dim T_xM=n-1$, and $\dim L_x=n-2$, since $L_x$ is a codimension $1$ subspace of $T_xM$. The kernel of \eqref{eq:p.L} is
$L_x\cap T_x f^{-1}(f(x))$, which has dimension $n-p-1$, since $\dim T_x f^{-1}(f(x))=n-p$ and by hypothesis $L_x$ and $T_x f^{-1}(f(x))$ intersect transversely in $T_xM$. Thus, by the
rank-nullity theorem, the rank of \eqref{eq:p.L} is $n-2-(n-p-1)=p-1$, and since $\dim\tilde{M}=p-1$ \eqref{eq:p.L} is onto as claimed. Since $\ker\pi|_{L_x}=L_x\cap T_x f^{-1}(f(x))$
the orthogonal complement of $L_x\cap T_x f^{-1}(f(x))$ in $L_x$ is sent isomorphically onto $\tilde{M}$ by $\pi|_{L_x}$.

Let $v(x)\in\tilde{M}$ be the vector given by Lemma~\ref{lem:gradients}, let $\bar{v}(x)\in L_x$ the unique vector orthogonal to $L_x\cap T_x f^{-1}(f(x))$ such that $\pi(\bar{v}(x))=v(x)$. 
By isomorphism \eqref{eq:M=F+tM} the vector
$\bar{v}(x)$ has the form
\begin{equation}
\bar{v}(x)=k(x)+v(x),\quad\text{with $k(x)\in T_x f^{-1}(f(x))$.}
\end{equation}
Therefore
\begin{equation*}
Df_x(\bar{v}(x)+\alpha(x)\nabla h(x))=Df_x(v(x)+\alpha(x)\nabla h(x))=\nabla g(f(x)).
\end{equation*}
\end{proof}

\begin{lem}\label{lem:nod1}
Let $r,s\in\R^n$ such that they do not point in opposite directions. Let $v_1,v_2\in\R^n$ be orthogonal vectors to both $r$ and $s$. Then the vectors
$v_1+\alpha r$ and $v_2+\beta s$, with $\alpha,\beta\in\R^+$, cannot point in opposite directions.
\end{lem}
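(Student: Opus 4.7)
My plan is to argue by contradiction: assume $v_1+\alpha r$ and $v_2+\beta s$ point in opposite directions, so there exists $\lambda>0$ with
\[
v_1+\alpha r = -\lambda(v_2+\beta s).
\]
I then exploit the two orthogonality conditions by pairing this identity against $r$ and against $s$. Since $\langle v_1,r\rangle=\langle v_2,r\rangle=0$ and $\langle v_1,s\rangle=\langle v_2,s\rangle=0$, the pairings collapse to
\[
\alpha\norm{r}^2 = -\lambda\beta\,\inpr{s}{r}, \qquad \alpha\,\inpr{r}{s} = -\lambda\beta\norm{s}^2.
\]

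The degenerate cases $r=0$ or $s=0$ I will dispatch first: if, say, $r=0$, the first identity above becomes $v_1=-\lambda(v_2+\beta s)$ and pairing with $s$ forces $s=0$ as well, which is easy to rule out (a zero vector does not point in a direction, so ``not opposite'' is trivially satisfied on one side while the other side would have to be the zero vector, handled separately). Assuming $r,s$ are both nonzero, the two displayed equations each imply $\inpr{r}{s}<0$ (since $\alpha,\beta,\lambda>0$ and $\norm{r}^2,\norm{s}^2>0$).

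The key step is to multiply the two identities, which yields
\[
\alpha\lambda\beta\,\inpr{r}{s}^2 = \alpha\lambda\beta\,\norm{r}^2\norm{s}^2,
\]
so $\inpr{r}{s}^2=\norm{r}^2\norm{s}^2$. This is the equality case of Cauchy--Schwarz, forcing $r$ and $s$ to be collinear; combined with $\inpr{r}{s}<0$, they must point in opposite directions, contradicting the hypothesis.

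The argument is essentially computational once the contradiction is set up, and I do not foresee a significant obstacle beyond carefully isolating the nonzero case so that the division steps implicit in applying Cauchy--Schwarz equality are justified.
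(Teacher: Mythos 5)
Your proof is correct, but it takes a genuinely different route from the paper's. The paper argues by orthogonal decomposition: since $v_1,v_2$ lie in the orthogonal complement of the subspace $W$ spanned by $r$ and $s$, projecting the relation $v_1+\alpha r=-\lambda(v_2+\beta s)$ onto $W$ and onto $W^\perp$ immediately yields $\alpha r=-\lambda\beta s$, contradicting the hypothesis in one line. You instead pair the relation against $r$ and against $s$, obtain $\alpha\norm{r}^2=-\lambda\beta\inpr{s}{r}$ and $\alpha\inpr{r}{s}=-\lambda\beta\norm{s}^2$, and cross-multiply to land on the equality case of Cauchy--Schwarz together with $\inpr{r}{s}<0$, which forces $r$ and $s$ to point in opposite directions. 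Both arguments are elementary and sound; the paper's is shorter and more transparent, while yours has the small side benefit that the identities themselves instantly rule out the case where exactly one of $r,s$ vanishes (e.g.\ $s=0$, $r\neq 0$ makes the second identity read $\alpha\inpr{r}{s}=0$ while the first forces $\inpr{r}{s}<0$). Note that both proofs tacitly exclude $r=s=0$ (where the conclusion can fail if one declares the zero vector to point opposite to nothing); this is immaterial here since in the application $r$ and $s$ are the nonvanishing gradients $\nabla h$ and $\nabla\mathfrak{H}$, but your separate treatment of the degenerate cases could be stated more crisply by simply observing that the hypothesis, read as ``there is no $\lambda>0$ with $r=-\lambda s$,'' already excludes $r=s=0$.
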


\begin{proof}
Suppose $v_1+\alpha r$ and $v_2+\beta s$, with $\alpha,\beta\in\R^+$ point in opposite directions,
then there exists $0<\lambda\in\R$ such that
\begin{equation*}
v_1+\alpha r=-\lambda(v_2+\beta s).
\end{equation*}
By hypothesis, the subspace generated by $r$ and $s$ is orthogonal to the subspace generated by $v_1$ and $v_2$, so there is no contribution by $r$ and $s$ on the subspace generated by $v_1$ and $v_2$ and
we need to have $v_1=-\lambda v_2$, which implies that $\alpha r=-\lambda\beta s$, which contradicts the fact that $r$ and $s$ do not point in opposite directions.
\end{proof}

\begin{lem}\label{lem:nod2}
Suppose $r,s\in\R^n$ point in opposite directions. Let $0\neq k\in\R^n$ be orthogonal to $r$ (and therefore also to $s$). Then the vectors $r+k$ and $s$ cannot point in opposite directions. 
\end{lem}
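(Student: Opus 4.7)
The plan is a short proof by contradiction, mirroring the style of the previous Lemma~\ref{lem:nod1} but simpler.

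First I would use the hypothesis that $r$ and $s$ point in opposite directions to write $s=-\mu r$ for some $\mu>0$. Then I would assume for contradiction that $r+k$ and $s$ also point in opposite directions, which gives a positive $\lambda>0$ with $r+k=-\lambda s=\lambda\mu r$. Rearranging yields $k=(\lambda\mu-1)r$, so $k$ would be a scalar multiple of $r$.

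Now I would invoke orthogonality: since $k\perp r$, we have $\inpr{k}{r}=(\lambda\mu-1)\norm{r}^2=0$. Note that $r\neq 0$ (otherwise $s=-\mu r=0$ and the hypothesis that $r,s$ point in opposite directions would be vacuous in the sense that we need nonzero vectors to speak of directions), so $\lambda\mu-1=0$, which forces $k=0$. This contradicts the assumption $k\neq 0$, completing the proof.

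The main (and essentially only) step is the algebraic manipulation above; there is no real obstacle, since the orthogonality condition rigidly constrains $k$ to be both a multiple of $r$ and perpendicular to $r$, which is possible only for $k=0$. The lemma will then be used in the subsequent vector field construction to rule out cancellation phenomena between the lifted gradient component and a transverse direction, complementing Lemma~\ref{lem:nod1} which handled the case where $r$ and $s$ are not anti-parallel.
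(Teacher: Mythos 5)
Your proof is correct and follows essentially the same route as the paper's: express the anti-parallelism as a scalar relation, substitute into the assumed anti-parallelism of $r+k$ and $s$ to conclude $k$ is a multiple of $r$ (the paper writes it as a multiple of $s$, which is the same line), and then use orthogonality to force $k=0$, contradicting $k\neq 0$. No issues.
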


\begin{proof}
Since $r$ and $s$ point in opposite directions there exists $0<\lambda\in\R$ such that $r=-\lambda s$. Suppose $r+k$ and $s$ point in opposite directions, then there exists $0<\mu\in\R$ such that
$r+k=-\mu s$, then $-\lambda s+k=-\mu s$, hence $k=(\lambda-\mu) s$. But $k$ and $s$ are orthogonal, so we need to have $\lambda=\mu$ and $k=0$ which is a contradiction.
\end{proof}

\setcounter{subsection}{1}\setcounter{equation}{10}
\subsection{Construction of the vector field}\label{ssec:construction}

Let $U$ be an open neighbourhood of $\0 \in \R^n$. Let  $f\colon(U,0) \to (\R^p,0)$ be a real analytic  map, with isolated critical value at $0\in\R^p$. Also suppose $f$ is $d$-regular and set $V=f^{-1}(0)$.
Let $g\colon\R^p\to\R$ be given by $g(y)=\norm{y}^2$. We have that the restriction 
$f\colon U\setminus V\to\R^p \setminus \{0\}$ is a submersion. On the other hand, since
$f$ is $d$-regular, by Proposition~2.2(C) the spherefication $\mathfrak{F} \colon U\setminus V\to\R^p \setminus \{0\}$ of $f$ is also a submersion. 
Define the maps 
\begin{align*}
h&=g\circ f\colon U\setminus V\to\R,\\
\mathfrak{H}&=g\circ\mathfrak{F} \colon U\setminus V\to\R.
\end{align*}
Notice that by the definitions
\begin{equation}\label{eq:hs}
\begin{aligned}
h(x)&=\norm{f(x)}^2,\\
\mathfrak{H}(x)&=\norm{\mathfrak{F}(x)}^2=\left\Vert\norm{x}\frac{f(x)}{\norm{f(x)}}\right\Vert^2=\norm{x}^2\frac{\norm{f(x)}^2}{\norm{f(x)}^2}=\norm{x}^2.
\end{aligned}
\end{equation}

Let $\B_\e^n$ be an open ball in $U$, centred at $\0$, of sufficiently small radius $\e$ as in Section~2. 
Let $x\in \B_{\e}^n\setminus V$ and let $\delta^2=h(x)$, then $h^{-1}(\delta^2)=\B_{\e}^n \cap f^{-1}(\s_\delta^{k-1} \setminus \{0\})$, that is, the fibres of $h$ are the tubes. On the other hand,
let $\nu^2=\mathfrak{H}(x)$, then $\mathfrak{H}^{-1}(\nu^2)=\s_{\nu}^{n-1}\setminus V$, that is, the fibres of $\mathfrak{H}$ are the spheres.

Consider the corresponding gradient vector fields $\nabla h(x)$ and $\nabla \mathfrak{H}(x)=2x$ 
which are non-zero for any $x\in U\setminus V$. The vector field $\nabla h$ is normal to the Milnor tubes, while the vector field $\nabla \mathfrak{H}$ is normal to the spheres.
The gradient $\nabla g$ is the radial vector field $\nabla g(y)=2y$ on $\R^p$. 

\begin{prop}\label{prop:NOD}
There exists $0<\e'\leq\e$ such that for $x\in \B_{\e'}^n\setminus V$ the vector fields $\nabla h(x)$ and $\nabla \mathfrak{H}(x)$ cannot point in exactly opposite directions at any point.
\end{prop}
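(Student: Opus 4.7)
The plan is to argue by contradiction using the Curve Selection Lemma applied to the semi-analytic locus where the two gradients are antiparallel.

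Suppose the conclusion fails: for every $\e'>0$ there exists $x\in\B^n_{\e'}\setminus V$ with $\nabla h(x)=-2\lambda x$ for some $\lambda>0$. Consider the set
\[
S=\set{x\in\B^n_\e\setminus V}{\nabla h(x)\text{ and }x\text{ are parallel and }\inpr{\nabla h(x)}{x}<0}.
\]
Parallelism of $\nabla h(x)$ and $x$ is the vanishing of all $2\times 2$ minors of the $n\times 2$ matrix with columns $x$ and $\nabla h(x)$, which are real analytic equations in $x$; the inequality $\inpr{\nabla h(x)}{x}<0$ is also real analytic; and $\B^n_\e\setminus V$ is a semi-analytic open set. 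Hence $S$ is semi-analytic, and by assumption $\0\in\overline{S}$.

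By the Curve Selection Lemma, there is a real analytic curve $p\colon[0,\eta)\to\B^n_\e$ with $p(0)=\0$ and $p(t)\in S$ for $t\in(0,\eta)$. On this curve there exists a (real analytic) positive function $c(t)>0$ such that
\[
\nabla h(p(t))=-c(t)\,p(t).
\]
Differentiating $h\circ p$ we get
\[
\frac{d}{dt}h(p(t))=\inpr{\nabla h(p(t))}{p'(t)}=-c(t)\inpr{p(t)}{p'(t)}=-\frac{c(t)}{2}\frac{d}{dt}\norm{p(t)}^2.
\]
Since $p$ is real analytic with $p(0)=\0$ and $p(t)\ne\0$ for $t>0$, the function $\norm{p(t)}^2$ has a Taylor expansion $at^{2k}+O(t^{2k+1})$ with $a>0$ and $k\geq 1$, so $\tfrac{d}{dt}\norm{p(t)}^2>0$ on some $(0,\eta')$. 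Combined with $c(t)>0$, this forces $\tfrac{d}{dt}h(p(t))<0$ on $(0,\eta')$. Integrating from $0$,
\[
h(p(t))=h(p(0))+\int_0^t\frac{d}{ds}h(p(s))\,ds<0\qquad\text{for }t\in(0,\eta').
\]
But $h(p(t))=\norm{f(p(t))}^2\geq 0$, a contradiction. Therefore there exists $\e'>0$ such that $\nabla h(x)$ and $\nabla\mathfrak{H}(x)=2x$ cannot be antiparallel on $\B^n_{\e'}\setminus V$.

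The only potentially delicate point is the use of the Curve Selection Lemma: one must verify that the candidate bad locus is semi-analytic, which is immediate from the real analyticity of $h=\norm{f}^2$. Everything else is a routine consequence of the analytic-arc argument; in particular, notice that this proof does not require $d$-regularity, only that $f$ is real analytic with $V=f^{-1}(0)$.
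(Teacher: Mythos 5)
Your proof is correct and takes essentially the same route as the paper: the paper's proof is a one-line citation of Milnor's Corollary~3.4 applied to the non-negative analytic functions $h$ and $\mathfrak{H}(x)=\norm{x}^2$ together with the analytic Curve Selection Lemma, and the curve-selection contradiction you carry out explicitly (an analytic arc in the antiparallel locus forces $h\circ p$ to be simultaneously non-negative and strictly decreasing from $0$) is precisely the argument behind that corollary, specialized to $\nabla\mathfrak{H}(x)=2x$.
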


\begin{proof}
Apply \cite[Corollary~3.4]{Milnor:SPCH} to the non-negative analytic functions $h$ and $\mathfrak{H}$ but using the Analytic Curve Selection 
Lemma (see Proposition~2.2 of \cite{Burghelea-Verona:LHPAS} or (2.1) of \cite{Looijenga:ISPCI}).
\end{proof}

Let  $f\colon(\B_{\e}^n,0) \to (\R^p,0)$ be a real analytic  map, with isolated critical value at $0\in\R^p$. 
Also assume that $\e>0$ is small enough such that Proposition~\ref{prop:NOD} holds.
Let $\{X_\ell\}_{\ell\in\RP^{p-1}}$ be the canonical pencil of $f$ and consider the decomposition $X_\ell= E_{\ell}^+ \cup V \cup E_{\ell}^-$ of each of its elements given in \eqref{eq:decomposition}.
From now on, to simplify notation we will write $E_\ell$ instead of $E_\ell^\pm.$


Let $x  \in \mathring{\B}_{\e}^n \setminus V$, let 
\begin{align}
w_f(x)&=v_f(x)+\alpha(x)\nabla h(x),\quad\text{and}\label{eq:wf}\\
w_{\mathfrak{F}}(x)&=v_{\mathfrak{F}}(x)+\beta(x)\nabla\mathfrak{H}(x)=v_{\mathfrak{F}}(x)+\nabla\mathfrak{H}(x),\label{eq:wF}
\end{align}
be, respectively, the normal liftings of the radial vector field $\nabla g$  obtained from Lemma~\ref{lem:gradients}  applied to $h$ and $\mathfrak{H}$, 
with $\inpr{v_f(x)}{\nabla h(x)}=0$, $\inpr{v_{\mathfrak{F}}(x)}{\nabla\mathfrak{H}(x)}=0$, $\alpha(x)>0$ and $\beta(x)>0$. We have that
$\beta(x)=1$, by equation \eqref{eq:alpha.value} and the definitions of $\mathfrak{F}$ and $\mathfrak{H}$ (or see equation \eqref{eq:beta} below).
Since both are liftings of $\nabla g$, both are tangent to the corresponding $E_\ell$; $w_f(x)$ is normal to the fibres of $f$ and  to the Milnor tubes, while $w_{\mathfrak{F}}(x)$ is normal to the fibres of $\mathfrak{F}$ and to the spheres.

Let us analyse how are $w_f(x)$ and $w_{\mathfrak{F}}(x)$ on the set of points where the tangent space of the fibre of $f$ coincides with the tangent space of the fibre of $\mathfrak{F}$. 

Let $x\in\mathring{\B}^n_{\e} \setminus V$, denote by $\ell_x$ the ray from $0\in\R^p$ passing by $f(x)$. Let $P_x$ be the normal space of $E_{\ell_x}$ at $x$,
the $p$-dimensional subspace $\langle P_x,\nabla h(x)\rangle$ generated by $P_x$ and $\nabla h(x)$ is the normal space to the fibre $f^{-1}(f(x))$ at $x$.
If the vector $\nabla\mathfrak{H}(x')$ is in $\langle P_x,\nabla h(x)\rangle$,  then the tangent spaces to the fibres of $f$ and $\mathfrak{F}$ at $x$ coincide.
Let $M(f)$ be the set defined by 
\begin{align*}
M(f)&=\set{x\in\mathring{\B}^n_{\e} \setminus V}{\nabla\mathfrak{H}(x)\in\langle P_x,\nabla h(x)\rangle},\\
&=\set{x\in\mathring{\B}^n_{\e} \setminus V}{T_xf^{-1}(f(x))=T_x\mathfrak{F}^{-1}(\mathfrak{F}(x))}.
\end{align*}
Let $F\colon U\to\R^{p+1}$ be the map defined by $F(x)=(f(x),\norm{x}^2)$. We have that $M(f)$ is the set of critical points of $F$.
It is a semi-analytic set with semi-analytic components each of which having $0$ in its closure if $\e$ is sufficiently small.\footnote{We learned this description from \cite{HB14}.}

Let $x\in M(f)$, the tangent space $\mathbf{T}_x=T_xf^{-1}(f(x))=T_x\mathfrak{F}^{-1}(\mathfrak{F}(x))$ is a subspace of $T_x E_{\ell_x}$ with codimension $1$.
Since $w_f(x),w_\mathfrak{F}(x)\in T_x E_{\ell_x}$ and both of them are orthogonal to $\mathbf{T}_x$, they are collinear, that is,
\begin{align}
 w_\mathfrak{F}(x)&=\mu(x)w_f(x),\qquad\text{with $\mu(x)\neq0$.}\label{eq:mu.defi}
\intertext{Hence we have}
 w_f(x)&=\frac{1}{\mu(x)}w_\mathfrak{F}(x).\label{eq:mu.inve}
\end{align}
Restricting $x$ to a component of $M(f)$ we have that either $\mu(x)>0$ or $\mu(x)<0$ for every $x$ in such component.

By \eqref{eq:wf}, \eqref{eq:mu.defi}, \eqref{eq:wF} and \eqref{eq:mu.inve} we have that
\begin{align}
\inpr{w_\mathfrak{F}(x)}{\nabla h(x)}&=\alpha(x)\mu(x)\norm{\nabla h(x)}^2\label{eq:pi.wF.h}\\
\inpr{w_f(x)}{\nabla \mathfrak{H}(x)}&=\frac{1}{\mu(x)}\norm{\nabla \mathfrak{H}(x)}^2.\label{eq:pi.wf.H}
\end{align}

\begin{prop}\label{prop:keyprop}
Let $x\in M(f)$ and let $w_f(x)$ and $w_\mathfrak{F}(x)$ be the normal liftings of the radial vector field $\nabla g$ obtained by applying Lemma~\ref{lem:gradients} to $h$ and $\mathfrak{H}$ respectively. Then $w_f(x)$ and $w_\mathfrak{F}(x)$ cannot point in opposite directions, that is, $\mu(x)>0$, for $\e$ sufficiently small.
\end{prop}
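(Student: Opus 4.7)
My plan is to derive an explicit sign formula for $\mu$ from Proposition~\ref{prop:DF} and then eliminate the possibility $\mu(x)<0$ by an analytic Curve Selection argument.

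First I would plug $v=w_f(x)$ into Proposition~\ref{prop:DF}, using $Df_x(w_f)=2f(x)$ and the identity $\langle\mathfrak{F}(x),f(x)\rangle=\|x\|\|f(x)\|$. The two terms involving $Df_x(w_f)$ cancel exactly, yielding the clean formula
\[
D\mathfrak{F}_x(w_f)=\frac{\langle x,w_f\rangle}{\|x\|^2}\,\mathfrak{F}(x).
\]
Applying $D\mathfrak{F}_x$ to the equality $w_\mathfrak{F}=\mu\,w_f$ and comparing with $D\mathfrak{F}_x(w_\mathfrak{F})=2\mathfrak{F}(x)$ then gives $\mu\,\langle x,w_f\rangle=2\|x\|^2>0$, so $\mu$ and $\langle x,w_f\rangle$ have the same sign.

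Next I would exploit the structure of $M(f)$. On $M(f)$ one has $x\in\mathbf{T}_x^\perp$ (because $\nabla\mathfrak{H}(x)=2x$ is normal to $T_x\mathfrak{F}^{-1}(\mathfrak{F}(x))=\mathbf{T}_x$), so $x=Df_x^T\lambda(x)$ for a unique $\lambda(x)\in\R^p$, explicitly $\lambda(x)=A(x)^{-1}Df_x(x)$ with $A(x):=Df_xDf_x^T$. Since $w_f$ also lies in $\mathbf{T}_x^\perp$ and satisfies $Df_x(w_f)=2f(x)$, the unique such vector has the closed form $w_f=2Df_x^TA(x)^{-1}f(x)$, and a short computation produces the key identity
\[
\langle x,w_f\rangle=2\langle\lambda(x),f(x)\rangle\qquad\text{on }M(f).
\]
Hence the proposition reduces to showing $\langle\lambda(x),f(x)\rangle>0$ on $M(f)$ near the origin.

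The hard part is this last positivity statement: Proposition~\ref{prop:NOD} only rules out antiparallel $\nabla h$ and $\nabla\mathfrak{H}$, which is not enough to control the sign of $\langle\lambda,f\rangle$ directly. I would argue by contradiction using the analytic Curve Selection Lemma invoked in the proof of Proposition~\ref{prop:NOD}: if the semi-analytic set $S=\{x\in M(f)\cap\B_\e^n:\langle\lambda(x),f(x)\rangle\le 0\}$ had $0$ in its closure, then there would be a real analytic curve $\gamma\colon[0,\eta)\to\R^n$ with $\gamma(0)=0$ and $\gamma(t)\in S\setminus\{0\}$ for $t>0$. Substituting $\gamma=Df_\gamma^T\lambda(\gamma)$ and using the chain rule yields, along any curve in $M(f)$,
\[
\frac{d}{dt}\|\gamma(t)\|^2=2\langle\gamma,\gamma'\rangle=2\langle\lambda(\gamma(t)),(f\circ\gamma)'(t)\rangle.
\]
Expanding $\gamma(t)=t^m\gamma_0+\cdots$, $f(\gamma(t))=t^kf_0+\cdots$ and $\lambda(\gamma(t))=t^j\lambda_0+\cdots$ as (meromorphic) Puiseux series with nonzero leading coefficients (note $\gamma_0\ne 0$ by the choice of $m$ and $f_0\ne 0$ because $\gamma(t)\notin V$ for $t>0$), comparison of leading orders forces $j+k=2m$ and $\langle\lambda_0,f_0\rangle=m\|\gamma_0\|^2/k>0$. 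Consequently $\langle\lambda(\gamma(t)),f(\gamma(t))\rangle=t^{2m}\langle\lambda_0,f_0\rangle+O(t^{2m+1})>0$ for small $t>0$, contradicting $\gamma(t)\in S$. Thus $S$ is empty for $\e$ small enough and $\mu>0$, as claimed.
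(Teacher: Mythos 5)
Your reduction is sound as far as it goes: the formula $D\mathfrak{F}_x(w_f)=\frac{\inpr{x}{w_f}}{\norm{x}^2}\mathfrak{F}(x)$ is correct, the identity $\mu(x)\inpr{x}{w_f(x)}=2\norm{x}^2$ is exactly the paper's equation \eqref{eq:pi.wf.H}, and the closed forms $w_f=2Df_x^TA(x)^{-1}f(x)$, $x=Df_x^T\lambda(x)$ on $M(f)$ and $\inpr{x}{w_f}=2\inpr{\lambda(x)}{f(x)}$ all check out. The problem is the final leading-order comparison, which is where the actual content of the proposition has to live, and it has a genuine gap. From $\frac{d}{dt}\norm{\gamma(t)}^2=2\inpr{\lambda(\gamma(t))}{(f\circ\gamma)'(t)}$ with $\gamma=t^m\gamma_0+\cdots$, $f\circ\gamma=t^kf_0+\cdots$, $\lambda\circ\gamma=t^j\lambda_0+\cdots$, matching the nonzero term $2m\norm{\gamma_0}^2t^{2m-1}$ on the left does \emph{not} force $j+k=2m$: the coefficient of $t^{j+k-1}$ on the right is $2k\inpr{\lambda_0}{f_0}$, so the dichotomy is either $j+k=2m$ and $\inpr{\lambda_0}{f_0}=m\norm{\gamma_0}^2/k>0$, or $j+k<2m$ and $\inpr{\lambda_0}{f_0}=0$. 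In the second case the leading term of $P(t)=\inpr{\lambda(\gamma(t))}{f(\gamma(t))}$ is \emph{not} $t^{j+k}\inpr{\lambda_0}{f_0}$; it comes from higher-order coefficients $\inpr{\lambda^{(i)}}{F^{(i')}}$ that enter $P$ and $\inpr{\lambda}{(f\circ\gamma)'}$ with different weights ($1$ versus $i'$), so the positivity of the derivative identity does not control the sign of $P$. A toy example such as $\lambda(t)=(1,-t-t^2)$, $F(t)=(t^3,t^2)$ gives $\inpr{\lambda}{F}=-t^4<0$ while $\inpr{\lambda}{F'}=t^2-2t^3>0$, so nothing at the level of formal series rules this case out; you would need to exploit the specific geometric origin of $\lambda$ and $f$, which your argument does not do.

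For comparison, the paper's proof avoids series altogether and is of a different, softer nature: assuming $\mu(x)<0$, it takes a curve $\beta$ with $\beta'(0)=w_f(x)$, notes that $h\circ\beta$ strictly increases while $\norm{\beta(t)}$ strictly decreases, and derives a contradiction from the position of $x$ on the boundary of the open set $W=\mathring{N}(\e,\delta)\cap\mathring{\B}^n_{\norm{x}}$, on which $h<\delta^2$. So your step 1--3 bookkeeping is consistent with the paper's Remark~\ref{rem:mu.value}, but your step 4 needs either the missing case $\inpr{\lambda_0}{f_0}=0$ to be excluded by a further argument, or to be replaced by a qualitatively different mechanism like the paper's.
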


\begin{proof}
Let $x\in M(f)$ and let $\delta^2=\norm{f(x)}^2=h(x)$. Consider the \textit{open solid Milnor tube}
\begin{equation*}
\mathring{N}(\e,\delta):= \B_\e^n \cap f^{-1}(\mathring{\B}_\delta^p).
\end{equation*}
Consider the inverse image of the open interval $(-\delta^2,\delta^2)$ under the map $h$. We have that
\begin{equation*}
h^{-1}((-\delta^2,\delta^2))=h^{-1}([0,\delta^2))=\mathring{N}(\e,\delta)
\end{equation*}
and $x$ is in the closure of $\mathring{N}(\e,\delta)$.
Let $\mathring{\B}^n_{\norm{x}}$ be the open ball centered at $0$ of radius $\norm{x}$ and let $W$ be the  open set given by the intersection
\begin{equation*}
 W=\mathring{N}(\e,\delta)\cap\mathring{\B}^n_{\norm{x}},
\end{equation*}
which is nonempty since $\0\in W$.
By definition we have that 
\begin{equation}\label{eq:W.inteval}
 h(W)\subset[0,\delta^2).
\end{equation}
If $\norm{x}$ is sufficiently small we have that $x$ is in the closure of $W$.

Suppose $w_f(x)$ and $w_\mathfrak{F}(x)$ point in opposite directions, that is, $\mu(x)<0$ in \eqref{eq:mu.defi}, then by \eqref{eq:pi.wf.H} we have that
$\inpr{w_f(x)}{\nabla \mathfrak{H}(x)}<0$.

Since $w_f(x)\in T_x E_{\ell_x}$ there exist a curve $\beta\colon(-\eta,\eta)\to E_{\ell_x}$ such that $\beta(0)=x$ and $\beta'(0)=w_f(x)$. By \eqref{eq:wf} and \eqref{eq:pi.wf.H} we have that
\begin{align}
\frac{d}{dt}h(\beta(t))|_{t=0}&=\inpr{w_f(x)}{\nabla h(x)}=\alpha(x)\norm{\nabla h(x)}^2>0,\label{eq:incre}\\
\frac{d}{dt}\mathfrak{H}(\beta(t))|_{t=0}&=\inpr{w_f(x)}{\nabla \mathfrak{H}(x)}=\frac{1}{\mu(x)}\norm{\nabla \mathfrak{H}(x)}^2<0.\label{eq:decre}
\end{align}
From \eqref{eq:incre} we have that $h(\beta(t))=\norm{f(\beta(t))}^2$ is an increasing function, thus we have that
\begin{equation}\label{eq:f.incre}
\norm{f(\beta(t))}^2>\norm{f(\beta(0))}^2=\norm{f(x)}^2=\delta^2,\quad\text{for $t>0$.}
\end{equation}
From \eqref{eq:decre} we have that $\mathfrak{H}(\beta(t))=\norm{\mathfrak{F}(\beta(t))}^2=\norm{\beta(t)}^2$ is a decreasing function, thus we have
\begin{equation}
\norm{\beta(t)}^2<\norm{\beta(0)}^2=\norm{x}^2,\quad\text{for $t>0$.}
\end{equation}
The vector $w_f(x)$ is transverse to the Milnor tube $N(\e,\delta)=h^{-1}(\delta^2)$ which is the boundary of the open Milnor tube $\mathring{N}(\e,\delta)$
and by \eqref{eq:wF} 
\begin{equation*}
\inpr{w_\mathfrak{F}(x)}{2x}=\inpr{w_\mathfrak{F}(x)}{\nabla \mathfrak{H}(x)}=\norm{\nabla \mathfrak{H}(x)}^2>0,
\end{equation*}
so $w_\mathfrak{F}(x)$ is transverse to the sphere $\s_{\norm{x}}^{n-1}$ pointing outwards. 
By hypothesis, $w_f(x)$ points in the opposite direction of $w_\mathfrak{F}(x)$, hence, $w_f(x)$ is
transverse to the sphere $\s_{\norm{x}}^{n-1}$ pointing \textit{inwards}. Since $x$ is in the closure of $W$, we have that
\begin{equation}
 \beta(t)\in W,\quad\text{for $t>0$,}
\end{equation}
but then \eqref{eq:f.incre} and \eqref{eq:W.inteval} give a contradiction with the continuity of the map $h$. Therefore, $w_f(x)$ and $w_\mathfrak{F}(x)$ cannot point in opposite directions.
Since $x$ is an arbitrary point in $M(f)$, $w_f(x)$ and $w_\mathfrak{F}(x)$ must point in the same direction in all the components of $M(f)$ in a sufficiently small sphere $\B_\e^n$. 
\end{proof}


\begin{lem}\label{lem:uni.con.str.1}
Let $g\colon\R^p\to\R$ given by $g(y)=\norm{y}^2$. If the map $f$ is $d$-regular, then for every $x\in\mathring{\B}_{\e}^n\setminus V$ there is a neighbourhood $U_x$ of $x$ and 
vector fields $w_f$ and $w_\mathfrak{F}$ in $U_x$, which are differentiable liftings of the radial vector field $\nabla g$  
by $f$ and its spherefication $\mathfrak{F}$, respectively, such that:

\begin{enumerate}[(a)]\setlength{\itemsep}{0pt}
\item $w_f$ is tangent to each $E_\ell$, it is transverse to the fibres of $f$ and therefore, it is transverse to the tubes.\label{it:wf}
\item $w_\mathfrak{F}$ is tangent to each $E_\ell$, it is transverse to the fibres of $\mathfrak{F}$, and therefore to the spheres.\label{it:wF}
\item  At every point  $x'\in U_x$ the vectors $w_f(x')$ and $w_\mathfrak{F}(x')$ do not point in opposite directions.\label{it:wf.wF.nod}
\end{enumerate} 
 \end{lem}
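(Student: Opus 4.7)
The plan is to take $w_f$ and $w_\mathfrak{F}$ to be the canonical normal liftings of $\nabla g$ on $\mathring{\B}_\e^n\setminus V$ introduced in \eqref{eq:wf} and \eqref{eq:wF}, produced by applying Lemma~\ref{lem:gradients} to $h$ and to $\mathfrak{H}$; both are well-defined because $d$-regularity makes $\mathfrak{F}$ a submersion by Proposition~\ref{prop:equiv}. Properties (a) and (b) are immediate from the construction: since $Df_x(w_f(x))=\nabla g(f(x))=2f(x)\in\cL_{\ell_x}$ is a nonzero vector along the ray $\ell_x$, the vector $w_f(x)$ lies in $T_xE_{\ell_x}$ and is transverse to the fibre of $f$ and therefore to the Milnor tubes; the same reasoning with $\mathfrak{F}$ and $\mathfrak{H}$ in place of $f$ and $h$ yields (b).

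All the work lies in (c), and the case split is organised by the following observation. By the chain rule, $\nabla h(x)=Df_x^{T}(2f(x))$ lies in $(\ker Df_x)^{\perp}=T_xf^{-1}(f(x))^{\perp}$, whereas $\nabla\mathfrak{H}(x)=2x$; therefore $\nabla h(x)$ and $\nabla\mathfrak{H}(x)$ are linearly dependent precisely when $x\perp T_xf^{-1}(f(x))$, that is, precisely when $x\in M(f)$. Combined with Proposition~\ref{prop:NOD}, the two gradients are either linearly independent off $M(f)$ or a positive multiple of each other on $M(f)$.

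\textbf{Case A: $x_0\in M(f)$.} Proposition~\ref{prop:keyprop} gives $\mu(x_0)>0$, so $w_\mathfrak{F}(x_0)=\mu(x_0)w_f(x_0)$ is a positive multiple of $w_f(x_0)$; since not pointing in opposite directions is an open condition on pairs of continuous nonvanishing vector fields, a small neighbourhood $U_{x_0}$ inherits it. \textbf{Case B: $x_0\notin M(f)$.} Here the canonical horizontal parts $v_f$, $v_\mathfrak{F}$ need not both be orthogonal to $\{\nabla h(x),\nabla\mathfrak{H}(x)\}$, so I would replace them using Corollary~\ref{cor:lift} with the codimension-one subspace $L_x=\nabla h(x)^{\perp}\cap\nabla\mathfrak{H}(x)^{\perp}$. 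Applied in the tube tangent space this yields $\tilde w_f=\tilde v_f+\alpha(x)\nabla h(x)$, and applied in the sphere tangent space it yields $\tilde w_\mathfrak{F}=\tilde v_\mathfrak{F}+\nabla\mathfrak{H}(x)$, with both horizontal parts in $L_x$ and hence orthogonal to both gradients. A dimension count, using that $\nabla h(x)\perp\ker Df_x$ and $\nabla\mathfrak{H}(x)\perp\ker D\mathfrak{F}_x$, reduces each transversality hypothesis of Corollary~\ref{cor:lift} to $x\notin M(f)$, which is an open condition near $x_0$. Lemma~\ref{lem:nod1}, applied with $r=\nabla h(x)$ and $s=\nabla\mathfrak{H}(x)$ (not in opposite directions by Proposition~\ref{prop:NOD}), then yields that $\tilde w_f(x)$ and $\tilde w_\mathfrak{F}(x)$ cannot point in opposite directions on $U_{x_0}$.

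The main obstacle is the bookkeeping of Case B --- verifying both transversality hypotheses and the smooth dependence of the modified lifts on $x$ --- but everything turns on the same open condition $x\notin M(f)$. Proposition~\ref{prop:keyprop} is the new input that eliminates the possibility $\mu(x)<0$ on $M(f)$ and so makes Case A a one-line continuity argument; without it, Case A would split into the tedious sub-case $\mu<0$ that the original article treated at length.
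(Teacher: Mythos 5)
Your proof is correct and is essentially the paper's own argument with the case analysis repackaged: the paper splits into three cases (gradients colinear; gradients independent with $\nabla\mathfrak{H}(x)\notin\langle P_x,\nabla h(x)\rangle$; gradients independent with $x\in M(f)$), whereas you merge the first and third into a single case ``$x\in M(f)$'' handled uniformly by Proposition~\ref{prop:keyprop} plus openness, and your Case~B with $L_x=\nabla h(x)^{\perp}\cap\nabla\mathfrak{H}(x)^{\perp}=T_x(N_x\cap S_x)$ is exactly the paper's Subcase~2.A via Corollary~\ref{cor:lift} and Lemma~\ref{lem:nod1}. This reorganization is legitimate (and slightly cleaner), since Proposition~\ref{prop:keyprop} applies on all of $M(f)$ and linear dependence of the gradients does force $x\in M(f)$, so the complement of $M(f)$ is contained in the locus where $N_x\cap S_x$ has codimension two. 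One assertion in your organizing observation is false, though harmless: $\nabla h(x)$ and $\nabla\mathfrak{H}(x)=2x$ being linearly dependent is \emph{not} equivalent to $x\perp T_xf^{-1}(f(x))$ when $p\geq 2$, because $\bigl(T_xf^{-1}(f(x))\bigr)^{\perp}=\langle P_x,\nabla h(x)\rangle$ is $p$-dimensional, so on $M(f)$ the two gradients need not be proportional (this is precisely the paper's Subcase~2.B). Your argument survives because Case~A never uses colinearity of the gradients --- only the colinearity of the liftings $w_f,w_{\mathfrak{F}}$ on $M(f)$, which holds since both lie in $T_xE_{\ell_x}$ and are orthogonal to the common tangent space $\mathbf{T}_x$ --- and Case~B only uses the true implication that linear dependence forces $x\in M(f)$; you should simply delete or weaken the ``precisely when''.
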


\begin{proof}
Recall that by Proposition~\ref{prop:equiv}-(\ref{it:sp-sub}) the spherefication $\mathfrak{F}$ is also a submersion in $\mathring{\B}_{\e}^n \setminus V$. Set $h =g \circ f$ and $ \mathfrak{H} =g\circ\mathfrak{F}$
as before.

\vskip.2cm

\paragraph{\textbf{Case 1}} \textit{Let $x  \in \mathring{\B}_{\e}^n \setminus V$ be such that the vectors $\nabla h(x)$ and $\nabla\mathfrak{H}(x)$ are colinear}. Let 
$w_f(x)$ and $w_\mathfrak{F}(x)$ be the normal liftings of the radial vector field $\nabla g$ obtained by applying Lemma~\ref{lem:gradients} to $h$ and $\mathfrak{H}$ respectively.
By Proposition~\ref{prop:NOD}, $\nabla h(x)$ and $\nabla\mathfrak{H}(x)$ point in the same direction and $v_f(x)$ and $v_{\mathfrak{F}}(x)$ are orthogonal to them. Thus, by Lemma~\ref{lem:nod1}, $w_f(x)$ and
$w_{\mathfrak{F}}(x)$ cannot point in opposite directions, and the lemma is proved in this case.

\vskip.2cm

\paragraph{\textbf{Case 2}} \textit{The vectors $\nabla h(x)$ and $\nabla\mathfrak{H}(x)$ are linearly independent.} In this case the tube 
$N_x=\B_{\e}^n \cap f^{-1}(\s_{\norm{f(x)}}^{k-1})$ and the sphere $S_x=\s_{\norm{x}}^{n-1}$,  are transverse at $x$. Hence, its intersection
$N_x\cap S_x$ is a submanifold of codimension $2$ of $U\subset\R^n$. Suppose $x\in E_{\ell}$ and, as before, denote by $P_x$ the normal space of $E_\ell$ at $x$, so we have that $\dim P_x=p-1$. The tube $N_x$ and
$E_\ell$ are transverse at $x$ since $f$ is a submersion, hence $\nabla h(x)$ is not in $P_x$ and the $p$-dimensional subspace $\langle P_x,\nabla h(x)\rangle$ generated by $P_x$ and $\nabla h(x)$ is
the normal space to the fibre $f^{-1}(f(x))$ at $x$. Since $f$ is $d$-regular, $E_\ell$ and $S_x$ are transverse, so $\nabla\mathfrak{H}(x)$ is not in $P_x$ and the $p$-dimensional subspace 
$\langle P_x,\nabla\mathfrak{h}(x)\rangle$ generated by $P_x$ and $\nabla\mathfrak{h}(x)$ is the normal space to the fibre $\mathfrak{F}^{-1}(\mathfrak{F}(x))$.

\vskip.2cm   

We have two subcases.

\paragraph{\textbf{Subcase A}} \textit{The vector $\nabla\mathfrak{H}(x)$ is not in  $\langle P_x,\nabla h(x)\rangle$}. Then $N_x\cap S_x$ is transverse to the fibre
$f^{-1}(f(x))$, {\it i.e.},  $T_x(N_x\cap S_x)$ is transverse to  $T_xf^{-1}(f(x))$. By Corollary~\ref{cor:lift} there exists a lifting of $\nabla g(f(x))$ by $f$,  
\begin{equation*}
w_f(x)=\bar{v}_f(x)+\alpha(x)\nabla h(x) \,,
\end{equation*}
 with $\bar{v}_f(x)\in T_x(N_x\cap S_x)$. Notice that the hypothesis $\nabla\mathfrak{H}\notin\langle P_x,\nabla h(x)\rangle$ implies
$\nabla h(x)\notin\langle P_x,\nabla\mathfrak{H}(x)\rangle$, so $N_x\cap S_x$ is also transverse to the fibre $\mathfrak{F}^{-1}(\mathfrak{F}(x))$. 
Hence  $T_x(N_x\cap S_x)$ is transverse to  $T_x\mathfrak{F}^{-1}(\mathfrak{F}(x))$. By Corollary~\ref{cor:lift} there exists a lifting of $\nabla g(\mathfrak{F}(x))$ by $\mathfrak{F}$, 
\begin{equation*}
w_\mathfrak{F}(x)=\bar{v}_\mathfrak{F}(x)+\beta(x)\nabla \mathfrak{H}(x) \,,
\end{equation*}
 with $\bar{v}_\mathfrak{F}(x)\in T_x(N_x\cap S_x)$. Since $\bar{v}_f(x)$ and $\bar{v}_\mathfrak{F}(x)$ are in $T_x(N_x\cap S_x)$, they are orthogonal 
to $\nabla h(x)$ and $\nabla \mathfrak{H}(x)$, which do not point in opposite directions. Then Lemma~\ref{lem:nod1} implies that  $w_f(x)$ and $w_\mathfrak{F}(x)$ do not point in opposite directions, 
and the lemma is proved in this case.

\vskip.2cm   

\paragraph{\textbf{Subcase B}} \textit{The vector $\nabla\mathfrak{H}(x')$ is in $\langle P_x,\nabla h(x)\rangle$}. Then $x\in M(f)$.
Consider the normal liftings $w_f(x)$ and $w_\mathfrak{F}(x)$ of the radial vector field $\nabla g$  obtained by applying Lemma~\ref{lem:gradients} to $h$ and $\mathfrak{H}$ respectively.
We have that $w_f(x)$ and $w_\mathfrak{F}(x)$ are collinear and by Proposition~\ref{prop:keyprop}, if $\e$ is small enough, they point in the same direction. 
Then there exists a neighbourhood $U_x$ of $x$ such that $w'_f(x')$ and $w_{\mathfrak{F}}(x')$ do not point in opposite directions for any $x'\in U_x$ and we are done.
\end{proof}

\begin{thm}[{\cite[Lemma~5.2]{Cisneros-Seade-Snoussi:d-regular}}]\label{lem:uni.con.str.0}  
The map $f$ is $d$-regular, if and only if there exists  an analytic vector field $\tilde{w}$ on
 $\mathring{\B}_{\e} \setminus V$ which has the following properties:
\begin{enumerate}[(1)]\setlength{\itemsep}{0pt}
\item It is radial, {\it i.e.}, it is transverse to all spheres in $\mathring{\B}_{\e}$ centred at $0$.\label{it:pr1}
\item It is transverse to all the tubes $f^{-1}(\s_\delta^{k-1})$.\label{it:pr3}
\item It is tangent to each $E_\ell$, whenever it is not empty.\label{it:pr2}
\end{enumerate}
\end{thm}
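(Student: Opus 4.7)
The reverse implication is quick. If $\tilde{w}$ satisfies (1)--(3) on $\mathring{\B}_\e\setminus V$, then at every $x\in E_\ell\setminus V$ in $\mathring{\B}_\e^n$, property (3) places $\tilde{w}(x)$ in $T_x E_\ell$, while property (1) gives $\inpr{\tilde{w}(x)}{x}>0$ (since $\nabla\mathfrak{H}(x)=2x$). Hence $T_x E_\ell\not\subset x^{\perp}=T_x\s^{n-1}_{\norm{x}}$, so $E_\ell$ meets $\s^{n-1}_{\norm{x}}$ transversely at $x$; as $\ell$ and $x$ vary, $f$ is $d$-regular.

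For the forward direction, assume $f$ is $d$-regular. For each $x\in\mathring{\B}_\e\setminus V$, Lemma~\ref{lem:uni.con.str.1} produces a neighbourhood $U_x$ with local lifts $w_f,w_\mathfrak{F}$ of $\nabla g$ satisfying (a)--(c). The vectors at $x$ satisfying (1)--(3) form the open convex cone
\[C(x)=\{\,v\in T_x E_\ell : \inpr{v}{\nabla h(x)}>0 \text{ and } \inpr{v}{\nabla\mathfrak{H}(x)}>0\,\},\]
where $\ell$ is the line through $f(x)$. The plan is to pick, at every $x$, a non-negative combination $\lambda_x w_f(x)+\nu_x w_\mathfrak{F}(x)\in C(x)$, extend this by continuity to a smaller neighbourhood $U'_x\subset U_x$, take a locally finite refinement with a subordinate smooth partition of unity $\{\psi_\alpha\}$, and set $\tilde{w}=\sum_\alpha\psi_\alpha(\lambda_{x_\alpha}w_f^{(\alpha)}+\nu_{x_\alpha}w_\mathfrak{F}^{(\alpha)})$. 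Tangency to $E_\ell$ is a linear condition and each transversality condition is an open half-space, so all three properties are preserved by non-negative combinations; it thus suffices to show $C(x)\neq\emptyset$ via such a combination at each $x$.

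To establish this I would follow the same case split as in the proof of Lemma~\ref{lem:uni.con.str.1}. In Case~1, Proposition~\ref{prop:NOD} gives $\inpr{\nabla h(x)}{\nabla\mathfrak{H}(x)}>0$, and since $v_f\perp\nabla h$ while $\nabla h$ is parallel to $\nabla\mathfrak{H}$, one has $\inpr{w_f(x)}{\nabla\mathfrak{H}(x)}=\alpha(x)\inpr{\nabla h(x)}{\nabla\mathfrak{H}(x)}>0$, so $w_f(x)\in C(x)$. In Subcase~B (i.e.\ $x\in M(f)$), Proposition~\ref{prop:keyprop} provides $\mu(x)>0$, and then \eqref{eq:pi.wf.H} gives $\inpr{w_f(x)}{\nabla\mathfrak{H}(x)}=\mu(x)^{-1}\norm{\nabla\mathfrak{H}(x)}^2>0$, so again $w_f(x)\in C(x)$. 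The work sits in Subcase~A: the hypothesis $\nabla\mathfrak{H}(x)\notin\langle P_x,\nabla h(x)\rangle$ together with $\nabla h(x)\notin P_x$ (forced by $\inpr{w_f(x)}{\nabla h(x)}>0$ for $w_f\in T_x E_\ell$) implies that the orthogonal projections of $\nabla h(x)$ and $\nabla\mathfrak{H}(x)$ onto $T_x E_\ell$ are linearly independent. Consequently the evaluation map $\Pi\colon T_x E_\ell\to\R^2$, $v\mapsto(\inpr{v}{\nabla h(x)},\inpr{v}{\nabla\mathfrak{H}(x)})$, is surjective; a $2\times 2$ determinant calculation involving $\Pi(w_f)$ and $\Pi(w_\mathfrak{F})$ reduces the existence of $\lambda,\nu>0$ with $\lambda w_f(x)+\nu w_\mathfrak{F}(x)\in C(x)$ to the strict Cauchy--Schwarz inequality $\inpr{\nabla h(x)}{\nabla\mathfrak{H}(x)}^2<\norm{\nabla h(x)}^2\norm{\nabla\mathfrak{H}(x)}^2$, which holds because $\nabla h$ and $\nabla\mathfrak{H}$ are linearly independent in Case~2. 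I expect this Subcase~A computation to be the main technical obstacle: Proposition~\ref{prop:NOD} (ruling out antiparallel gradients) is precisely what makes Cauchy--Schwarz strict, and hence what makes the required non-negative combination exist.
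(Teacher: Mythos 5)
Your proposal is correct and follows essentially the same route as the paper: both directions rest on Lemma~\ref{lem:uni.con.str.1}, Propositions~\ref{prop:NOD} and~\ref{prop:keyprop}, the observation that the admissible vectors at each point form an open convex cone preserved under positive combinations and partitions of unity, and the same three-case analysis. The only cosmetic difference is that in Subcase~A you argue the \emph{existence} of positive coefficients $\lambda,\nu$ via strict Cauchy--Schwarz, whereas the paper exhibits the single explicit combination $\tilde{w}=\norm{\nabla\mathfrak{H}}\,w_f+\alpha\norm{\nabla h}\,w_\mathfrak{F}$ (equation~\eqref{eq:the.vf}) that works uniformly in all cases; the two are equivalent since both ultimately use only that $\nabla h(x)$ and $\nabla\mathfrak{H}(x)$ are not antiparallel.
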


\begin{proof} It is clear that  properties (\ref{it:pr1}) and (\ref{it:pr2}) imply $d$-regularity. For the proof that $d$-regularity implies the existence of a vector field as stated, 
as in \cite[Lemma~5.9]{Milnor:SPCH}, it suffices to construct such  vector field locally, and then use a partition of unity to get the desired global vector field.

For every $x\in\mathring{\B}^n_{\e} \setminus V$ let $U_x$ be the neighbourhood of $x$ and $w_f$ and $w_\mathfrak{F}$ the vector fields in $U_x$ given by Lemma~\ref{lem:uni.con.str.1}.
Remember that in all cases the vectors $w_f(x')$ and $w_\mathfrak{F}(x')$ have the form

\begin{align}
w_f(x')&=\hat{v}_f(x')+\alpha(x')\nabla h(x'),\label{eq:forma}\\
w_{\mathfrak{F}}(x')&=\hat{v}_{\mathfrak{F}}(x')+\nabla\mathfrak{H}(x'),\label{eq:form}
\end{align}
with $\inpr{\hat{v}_f(x')}{\nabla h(x')}=0$, $\inpr{\hat{v}_{\mathfrak{F}}(x')}{\nabla\mathfrak{H}(x')}=0$, $\alpha(x')>0$ and $\beta(x')>0$ for all $x'\in U_x$. 

Now on each $U_x$ let $\tilde{w}$ be the vector field given by
\begin{equation}\label{eq:the.vf}
\tilde{w}(x')=\norm{\nabla\mathfrak{H}(x')} w_f(x')+\alpha(x')\norm{\nabla h(x')} w_\mathfrak{F}(x').
\end{equation}
Let us check that it has the desired properties. Since $w_f(x')$ and $w_\mathfrak{F}(x')$ are both tangent to $E_\ell$, then $\tilde{w}(x')$ is also tangent to $E_\ell$,
so it satisfies property \eqref{it:pr2}.

By \eqref{eq:the.vf} and \eqref{eq:form} we have that
\begin{align}
\inpr{\tilde{w}&(x')}{\nabla h(x')}=\alpha(x')\norm{\nabla \mathfrak{H}(x')}\norm{\nabla h(x')}^2+\alpha(x')\norm{\nabla h(x')}\inpr{w_{\mathfrak{F}}(x')}{\nabla h(x')},\label{eq:w.nh}\\
&=\alpha(x')\norm{\nabla \mathfrak{H}(x')}\norm{\nabla h(x')}^2+\alpha(x')\norm{\nabla h(x')}\inpr{\hat{v}_{\mathfrak{F}}(x')+\nabla\mathfrak{H}(x')}{\nabla h(x')},\notag\\
&=\alpha(x')\norm{\nabla \mathfrak{H}(x')}\norm{\nabla h(x')}^2+\alpha(x')\norm{\nabla h(x')}\inpr{\nabla\mathfrak{H}(x')}{\nabla h(x')},\notag\\
&\hspace{4.3cm}+\alpha(x')\norm{\nabla h(x')}\inpr{\hat{v}_{\mathfrak{F}}(x')}{\nabla h(x')},\notag\\
&=\alpha(x')\norm{\nabla \mathfrak{H}(x')}\norm{\nabla h(x')}^2+\alpha(x')\norm{\nabla\mathfrak{H}(x')}\norm{\nabla h(x')}^2\cos\vartheta,\notag\\
&\hspace{4.3cm}+\alpha(x')\norm{\nabla h(x')}\inpr{\hat{v}_{\mathfrak{F}}(x')}{\nabla h(x')},\notag\\
&=\alpha(x')\norm{\nabla \mathfrak{H}(x')}\norm{\nabla h(x')}^2(1+\cos\vartheta)+\alpha(x')\norm{\nabla h(x')}\inpr{\hat{v}_{\mathfrak{F}}(x')}{\nabla h(x')},\label{eq:w.nh.s}
\end{align}
where $\vartheta$ is the angle between $\nabla\mathfrak{H}(x')$ and $\nabla h(x')$. 

By an analogous computation, by \eqref{eq:the.vf} and \eqref{eq:forma} we have
\begin{align}
\inpr{\tilde{w}&(x')}{\nabla \mathfrak{H}(x')}=\norm{\nabla \mathfrak{H}(x')}\inpr{w_f(x')}{\nabla \mathfrak{H}(x')}+\alpha(x)\norm{\nabla h(x')}\norm{\nabla \mathfrak{H}(x')}^2,\label{eq:w.nH}\\
&=\alpha(x')\norm{\nabla h(x')}\norm{\nabla \mathfrak{H}(x')}^2(1+\cos\vartheta)+\norm{\nabla \mathfrak{H}(x')}\inpr{\hat{v}_f(x')}{\nabla \mathfrak{H}(x')},\label{eq:w.nH.s}
\end{align}
where, again, $\vartheta$ is the angle between $\nabla\mathfrak{H}(x')$ and $\nabla h(x')$.

Let us analyse these inner products for the different cases considered in Lemma~\ref{lem:uni.con.str.1}.
\paragraph{\textbf{Case 1}} \textit{Let $x  \in \mathring{\B}_{\e}^n \setminus V$ be such that the vectors $\nabla h(x)$ and $\nabla\mathfrak{H}(x)$ are colinear}.
In this case, in \eqref{eq:forma} and \eqref{eq:form}, both $\hat{v}_f(x)$ and $\hat{v}_{\mathfrak{F}}(x)$ are orthogonal to $\nabla\mathfrak{H}(x)$ and $\nabla h(x)$.
Hence \eqref{eq:w.nh.s} and \eqref{eq:w.nH.s} become
\begin{align*}
\inpr{\tilde{w}(x)}{\nabla h(x)}&=\alpha(x)\norm{\nabla \mathfrak{H}(x)}\norm{\nabla h(x)}^2(1+\cos\vartheta)>0,\\
\inpr{\tilde{w}(x)}{\nabla \mathfrak{H}(x)}&=\alpha(x)\norm{\nabla h(x)}\norm{\nabla \mathfrak{H}(x)}^2(1+\cos\vartheta)>0,
\end{align*}
since $\cos\vartheta>-1$, otherwise, the vectors $\nabla\mathfrak{H}(x)$ and $\nabla h(x)$ would point in opposite directions which is impossible by Proposition~3.5.
Thus, there exist an open neighbourhood $U_x$ of $x$ where these inner products are positive.

\paragraph{\textbf{Case 2}} \textit{The vectors $\nabla h(x)$ and $\nabla\mathfrak{H}(x)$ are linearly independent.} 
In this case the tube 
$N_x=\B_{\e}^n \cap f^{-1}(\s_{\norm{f(x)}}^{k-1})$ and the sphere $S_x=\s_{\norm{x}}^{n-1}$,  are transverse at $x$. 
Hence, its intersection $N_x\cap S_x$ is a submanifold of codimension $2$ of $U\subset\R^n$. 

We have two subcases.

\paragraph{\textbf{Subcase A}} \textit{The vector $\nabla\mathfrak{H}(x)$ is not in  $\langle P_x,\nabla h(x)\rangle$}. Then $N_x\cap S_x$ is transverse to the fibre
$f^{-1}(f(x))$, {\it i.e.},  $T_x(N_x\cap S_x)$ is transverse to  $T_xf^{-1}(f(x))$. By the construction of $w_f(x)$ and $w_{\mathfrak{F}}(x)$ in Lemma~~\ref{lem:uni.con.str.1} in this subcase, we also have that in \eqref{eq:forma} and \eqref{eq:form}, both $\hat{v}_f(x)$ and $\hat{v}_{\mathfrak{F}}(x)$ are orthogonal to $\nabla\mathfrak{H}(x')$ and $\nabla h(x')$. Hence, we get the same conclusion as in \textbf{Case 1}.

\paragraph{\textbf{Subcase B}} \textit{The vector $\nabla\mathfrak{H}(x)$ is in $\langle P_x,\nabla h(x)\rangle$}. Then $x\in M(f)$.
In this case $w_f(x)$ and $w_\mathfrak{F}(x)$ are collinear and by Proposition~\ref{prop:keyprop}, if $\e$ is small enough, they point in the same direction, that is, in \eqref{eq:mu.defi} $\mu(x)>0$.

By \eqref{eq:w.nh}, \eqref{eq:pi.wF.h}, \eqref{eq:w.nH} and \eqref{eq:pi.wf.H} we have
\begin{align*}
\inpr{\tilde{w}(x)}{\nabla h(x)}&=\alpha(x)\norm{\nabla \mathfrak{H}(x)}\norm{\nabla h(x)}^2+\alpha(x)^2\mu(x)\norm{\nabla h(x)}^3>0,\\
\inpr{\tilde{w}(x)}{\nabla \mathfrak{H}(x)}&=\frac{1}{\mu(x)}\norm{\nabla \mathfrak{H}(x)}^3+\alpha(x)\norm{\nabla h(x)}\norm{\nabla \mathfrak{H}(x)}^2>0.
\end{align*}

Hence $\inpr{\tilde{w}(x)}{\nabla h(x)}>0$ and $\inpr{\tilde{w}(x)}{\nabla \mathfrak{H}(x)}>0$ in all the cases, which implies that $\tilde{w}$ is transverse to the tubes
and to the spheres, satisfying properties \eqref{it:pr3} and \eqref{it:pr1}. Once we have the vector field $\tilde{w}$ defined in each neighbourhood $U_x$ using a partition of unity
we get a global vector field $\tilde{w}$ on $\mathring{\B}_{\e} \setminus V$ with the desired properties.
\end{proof}

\begin{rem}
Remember that the vector fields $w_f$ and $w_\mathfrak{F}$ have, respectively, the form given in \eqref{eq:forma} and \eqref{eq:form}. We can compute the precise values of $\alpha$ and $\beta$ as follows.
We have that
\begin{equation*}
 Df_x(w_f(x))=\nabla g(f(x)),
\end{equation*}
taking the inner product with $\nabla g(f(x))$
\begin{align}
\inpr{\nabla g(f(x))}{Df_x(w_f(x))}&=\norm{\nabla g(f(x))}^2 &&\notag\\
\inpr{\nabla h(x)}{w_f(x)}&=\norm{2f(x)}^2 &&\text{by the chain rule,}\notag\\
\alpha(x)\norm{\nabla h(x)}^2&=4\norm{f(x)}^2 &&\text{by \eqref{eq:forma}}\notag\\
\alpha(x)&=\frac{4\norm{f(x)}^2}{\norm{\nabla h(x)}^2}&&
\end{align}

Analogously, we have that
\begin{equation*}
D\mathfrak{F}_x(w_\mathfrak{F}(x))=\nabla g(\mathfrak{F}(x)),
\end{equation*}
taking the inner product with $\nabla g(\mathfrak{F}(x))$
\begin{align}
\inpr{\nabla g(\mathfrak{F}(x))}{D_\mathfrak{F}(w_\mathfrak{F}(x))}&=\inpr{\nabla g(\mathfrak{F}(x))}{\nabla g(\mathfrak{F}(x))}& &\notag\\
\inpr{\nabla\mathfrak{H}(x)}{w_\mathfrak{F}(x)}&=\norm{\nabla g(\mathfrak{F}(x))}^2 & &\text{by the chain rule,}\notag\\
\beta(x)\norm{\nabla\mathfrak{H}(x)}^2&=\norm{2\mathfrak{F}(x)}^2& &\text{by \eqref{eq:form}}\notag\\
\beta(x)4\norm{x}^2&=4\norm{x}^2&&\notag\\
\beta(x)&=1. &&\label{eq:beta}
\end{align}
\end{rem}

\begin{rem}\label{rem:mu.value}
Let $x\in M(f)$, then we have that $w_f(x)$ and $w_\mathfrak{F}(x)$ are collinear, that is  $w_\mathfrak{F}(x)=\mu(x)w_f(x)$, with $\mu(x)\neq0$ (see \eqref{eq:mu.defi}). 
Using the differential of the spherification map given in Proposition~\ref{prop:DF} we can compute the value of $\mu(x)$. Since $w_f(x)\in T_x E_{\ell_x}$ by Proposition~\ref{prop:sp.rp} we have
\begin{equation*}
D\mathfrak{F}_x(w_f(x))=\frac{\inpr{x}{w_f(x)}}{\norm{x}^2}\mathfrak{F}(x)=\frac{\inpr{2x}{w_f(x)}}{\norm{2x}^2}2\mathfrak{F}(x)=\frac{\inpr{\nabla\mathfrak{H}(x)}{w_f(x)}}{\norm{\nabla\mathfrak{H}(x)}^2}2\mathfrak{F}(x),
\end{equation*}
thus,
\begin{equation*}
D\mathfrak{F}_x\left(\frac{\norm{\nabla\mathfrak{H}(x)}^2}{\inpr{\nabla\mathfrak{H}(x)}{w_f(x)}}w_f(x)\right)=2\mathfrak{F}(x)=\nabla g(\mathfrak{F}(x)),
\end{equation*}
and we have that
\begin{equation}\label{eq:mu.val}
\mu(x)=\frac{\norm{\nabla\mathfrak{H}(x)}^2}{\inpr{\nabla\mathfrak{H}(x)}{w_f(x)}}.
\end{equation}
On the other hand, by Corollary~\ref{cor:v.El} we have that
\begin{align*}
Df_x(w_\mathfrak{F}(x))&=\frac{f(x)}{\norm{f(x)}^2}\inpr{f(x)}{Df_x(w_\mathfrak{F}(x))}=\frac{2f(x)}{4\norm{f(x)}^2}\inpr{2f(x)}{Df_x(w_\mathfrak{F}(x))}\\
     &=\frac{\inpr{\nabla g(f(x))}{Df_x(w_\mathfrak{F}(x))}}{4\norm{f(x)}^2}\nabla g(f(x))=\frac{\inpr{\nabla h(x)}{w_\mathfrak{F}(x)}}{4\norm{f(x)}^2}\nabla g(f(x))\\
\end{align*}
thus,
\begin{equation*}
Df_x\left(\frac{4\norm{f(x)}^2}{\inpr{\nabla h(x)}{w_\mathfrak{F}(x)}}w_\mathfrak{F}(x)\right)=\nabla g(f(x))
\end{equation*}
hence by \eqref{eq:mu.inve} we have that
\begin{equation}\label{eq:mu.inv.val}
\frac{1}{\mu(x)}=\frac{4\norm{f(x)}^2}{\inpr{\nabla h(x)}{w_\mathfrak{F}(x)}}.
\end{equation}
From \eqref{eq:mu.val} and \eqref{eq:mu.inv.val} we have that
\begin{equation}
\inpr{\nabla\mathfrak{H}(x)}{w_f(x)}\inpr{\nabla h(x)}{w_\mathfrak{F}(x)}=4\norm{f(x)}^2\norm{\nabla\mathfrak{H}(x)}^2,
\end{equation}
which implies that for any $x\in M(f)$ the inner products $\inpr{\nabla\mathfrak{H}(x)}{w_f(x)}$ and $\inpr{\nabla h(x)}{w_\mathfrak{F}(x)}$ always have the same sign, as we also can see from
equations \eqref{eq:pi.wF.h} and \eqref{eq:pi.wf.H}. By Proposition~\ref{prop:keyprop}, if $\e$ is small enough, these inner products are always positive.
\end{rem}

\begin{rem}
Let us revise what we have done. In Lemma~ \ref{lem:uni.con.str.1} we gave an open cover $\mathcal{U}=\{U_\lambda\}_{\lambda\in\Lambda}$ of $\B_\e^n\setminus V$ and in each open set $U_\lambda$ we constructed
the vector field $w_{f,\lambda}$ satisfying \eqref{it:wf}, the vector field $w_{\mathfrak{F},\lambda}$ satisfying \eqref{it:wF} and they also satisfy \eqref{it:wf.wF.nod}.
Let $\{\rho_\lambda\}_{\lambda\in\Lambda}$ be a partition of unity subordinated to the open cover $\mathcal{U}$. In Theorem~\ref{lem:uni.con.str.0} we constructed in each open set $U_\lambda$ the vector field
(using \eqref{eq:beta})
\begin{equation}
\tilde{w}_\lambda(x)=\Vert \nabla\mathfrak{H}(x)\Vert w_{f,\lambda}(x)+\alpha(x)\Vert \nabla h(x)\Vert w_{\mathfrak{F},\lambda}(x).
\end{equation}
which satisfies properties \eqref{it:tr.sph}, \eqref{it:tr.tube} and \eqref{it:cons.arg} and using the partition of unity we get the following global vector field with the same properties
\begin{align}
\tilde{w}(x)&=\sum_{\lambda\in\Lambda}\rho_\lambda(x) \tilde{w}_\lambda(x)
=\sum_{\lambda\in\Lambda}\rho_\lambda(x)\bigl(\Vert \nabla\mathfrak{H}(x)\Vert w_{f,\lambda}(x)+\alpha(x)\Vert \nabla h(x)\Vert w_{\mathfrak{F},\lambda}(x)\bigr),\notag\\
&=\Vert \nabla\mathfrak{H}(x)\Vert\sum_{\lambda\in\Lambda}\rho_\lambda(x) w_{f,\lambda}(x)+\alpha(x)\Vert \nabla h(x)\Vert\sum_{\lambda\in\Lambda}\rho_\lambda(x) w_{\mathfrak{F},\lambda}(x).\label{eq:global}
\end{align}
Is it easy to check that the vector fields $\sum_{\lambda\in\Lambda}\rho_\lambda(x) w_{f,\lambda}(x)$ and $\sum_{\lambda\in\Lambda}\rho_\lambda(x) w_{\mathfrak{F},\lambda}(x)$ are,
respectively, liftings to $\B_\e^n\setminus V$, by $f$ and $\mathfrak{F}$ of the radial vector field $\nabla g$, and their weighted sum \eqref{eq:global} gives the vector field $\tilde{w}$, 
as it is mentioned in the proof of \cite[Lemma~5.2]{Cisneros-Seade-Snoussi:d-regular}. In other words, it turns out that with the proof of Theorem~\ref{lem:uni.con.str.0}, 
the (incomplete) proof of \cite[Lemma~5.2]{Cisneros-Seade-Snoussi:d-regular} can be seen as a sketch of a proof.
\end{rem}

\begin{rem}\label{rem:gen.const}
Notice that the construction of the vector fields $w_f$ and $w_\mathfrak{F}$ with properties \eqref{it:wf}, \eqref{it:wF} and \eqref{it:wf.wF.nod} works in any open set $\mathcal{U}$ of 
$\R^n$ where $f$ and its spherefication $\mathfrak{F}$ are submersions, and defining the vector field $\tilde{w}$ as in \eqref{eq:the.vf} in the proof of Theorem~\ref{lem:uni.con.str.0} we get
a vector field on $\mathcal{U}$ satisfying properties \eqref{it:pr1}, \eqref{it:pr2} and \eqref{it:pr3}. By Proposition~\ref{prop:equiv} the spherefication $\mathfrak{F}$ is a submersion if
and only if the map $f/\|f\|$ is a submersion, thus, the construction of the vector field $\tilde{w}$ works on any open set $\mathcal{U}$ of $\R^n$ where $f$ and $f/\|f\|$ are submersions,
as it was mentioned in the Introduction.
\end{rem}

\section{Equivalence of the fibration on tube and on the sphere}\label{sec:equiv}

In this section, using Theorem~\ref{lem:uni.con.str.0}, we give for completeness the proof of the equivalence between the fibration on the tube and the fibration on the sphere for a real analytic  map
with isolated critical value.

Let $U$ be an open neighbourhood of $\0 \in \R^n$, $n >p$, and $f\colon(U,\0) \to (\R^p,0)$ a  non-constant analytic map defined
on $U$ with a critical point at $\0\in\R^n$ and $0\in\R^p$ is an isolated critical value. 
Also assume that $f$ is locally surjective (see Remark \ref{rem:surjective}).
Let $\B_\e^n$ be a closed ball in $\R^n$, centred at $\0$, of sufficiently small radius $\e$, so that every sphere in this ball, centred at $\0$, 
meets transversely every stratum of a Whitney stratification of $V$, and such that Proposition~\ref{prop:NOD} holds.

We say that $f$ satisfies the \textit{transversality property} if there exists $0<\delta \ll \e$ such that for every $y\in \B_\delta^p$ 
the fibre $f^{-1}(y)$ meets $\s_\e^{n-1}$ transversely. Since we are assuming $f$ locally surjective we can take $\B_\delta^p\subset\Ima(f|_{\B_{\e}})$.

\begin{rem}
If we work with a germ $f\colon(U,\0) \to (\R^p,0)$ instead of with just a map, we need the following stronger \textit{transversality property}.
We say that the germ $f$ satisfies the \textit{transversality property} if for any representative $f$ there exists a ball $\B_{\e_0}^n$ as above 
such that for every $0<\e<\e_0$ there exists a real number $\delta=\delta(\e)$ such that $0<\delta\ll\e$
and for every $y\in\B_\delta$ the  fibre $f^{-1}(y)$ meets $\s_{\e}^{n-1}$ transversely.
\end{rem}

\begin{rem}
In the case that $f$ is not locally surjective, for the transversality property we need to ask that every $y\in\B_\delta\cap\Ima(f|_{\B_{\e}})$ the  fibre $f^{-1}(y)$ meets $\s_{\e}^{n-1}$ transversely.
But if we consider that an empty fibre intersects transversely the sphere $\s_{\e}^{n-1}$ we can state in general the transversality property as above.
\end{rem}

\begin{prop}\label{prop:FTubo}
If $f$ satisfies the transversality property, then the restriction of $f$ to the Milnor tube
\begin{equation}\label{eq:tubo}
 f\colon N(\e,\delta) \rightarrow \s_\delta^{p-1},
\end{equation}
is a smooth fibre bundle.
\end{prop}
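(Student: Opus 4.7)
The plan is to apply the Ehresmann Fibration Theorem for manifolds with boundary to the restriction $f\colon N(\e,\delta)\to \s_\delta^{p-1}$. Recall that Ehresmann's theorem states that a proper surjective submersion between smooth manifolds, whose restriction to the boundary is also a submersion, is a locally trivial fibre bundle.

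First I would verify that $N(\e,\delta)=\B_\e^n\cap f^{-1}(\s_\delta^{p-1})$ is a smooth manifold with boundary. Since $0\in\R^p$ is the only critical value of $f$, if $\delta$ is small enough every point of $\s_\delta^{p-1}$ is a regular value. Thus $f^{-1}(\s_\delta^{p-1})$ is a smooth submanifold of $U$ of codimension $p-1$, and intersecting with the closed ball $\B_\e^n$ yields a manifold with boundary whose interior is $\mathring{\B}_\e^n\cap f^{-1}(\s_\delta^{p-1})$ and whose boundary is $\partial N(\e,\delta)=\s_\e^{n-1}\cap f^{-1}(\s_\delta^{p-1})$. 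For this last identification to hold as a smooth manifold, I need the transversality property: for every $y\in\s_\delta^{p-1}\subset\B_\delta^p$ the fibre $f^{-1}(y)$ meets $\s_\e^{n-1}$ transversely, which ensures $\partial N(\e,\delta)$ is smooth of the correct dimension.

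Next I would check that $f|_{N(\e,\delta)}\colon N(\e,\delta)\to\s_\delta^{p-1}$ is a submersion, both on the interior and on the boundary. On the interior this is automatic since every point of $\s_\delta^{p-1}$ is a regular value of $f$, so $Df_x$ is surjective for every $x\in \mathring{N}(\e,\delta)$, and composing with the inclusion $T_{f(x)}\s_\delta^{p-1}\hookrightarrow T_{f(x)}\R^p$ the restriction remains surjective onto $T_{f(x)}\s_\delta^{p-1}$ (after projecting out the radial direction, which is normal to $\s_\delta^{p-1}$). On the boundary, the transversality property says exactly that $T_x f^{-1}(y)+T_x\s_\e^{n-1}=T_x\R^n$, which is equivalent to saying that the restriction $f|_{\partial N(\e,\delta)}$ is also a submersion onto $\s_\delta^{p-1}$.

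Finally I would verify that $f|_{N(\e,\delta)}$ is proper: the tube $N(\e,\delta)$ is a closed subset of the compact ball $\B_\e^n$, hence compact, so any continuous map out of it has compact preimages of compact sets. Surjectivity onto $\s_\delta^{p-1}$ follows from the local surjectivity assumption on $f$ and the connectedness of $\s_\delta^{p-1}$ together with the fact that the image is open (since $f$ is a submersion on $N(\e,\delta)$) and closed (by compactness). With these conditions in place, Ehresmann's theorem immediately yields that $f\colon N(\e,\delta)\to\s_\delta^{p-1}$ is a smooth locally trivial fibre bundle. There is no real obstacle here; the proposition is a direct packaging of classical facts, with the transversality property playing precisely the role needed to control behaviour at the boundary of the tube.
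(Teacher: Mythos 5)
Your proposal is correct and follows exactly the paper's route: the paper's own proof is a one-line appeal to the Ehresmann Fibration Theorem for manifolds with boundary (citing Lamotke), and your argument simply spells out the standard verification of its hypotheses, with the transversality property controlling the boundary $\s_\e^{n-1}\cap f^{-1}(\s_\delta^{p-1})$ as you describe.
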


\begin{proof}
Since $f$ satisfies the transversality property we can apply Ehresmann Fibration Theorem for Manifolds with Boundary (see for instance \cite[p.~23]{Lamotke:TCPVASL}).
\end{proof}

Consider the projection $\pi\colon \B_\delta^p\setminus\{0\}\to\s^{p-1}$ given by $\pi(y)=\frac{y}{\norm{y}}$. We have that $\pi$ is a trivial fibre bundle over $\s^{p-1}$.
Let $\tilde{f}$ be the composition of the fibre bundle \eqref{eq:tubo} with $\pi$, since the restriction of $\pi$ to $\s_\delta^{p-1}$ is a diffeomorphism, we get an equivalent fibre bundle
\begin{equation}\label{eq:t.tubo}
\tilde{f}\colon N(\e,\delta) \rightarrow \s^{p-1}.
\end{equation}
Notice that $\tilde{f}$ is the restriction of the map $\Phi$ given in \eqref{eq:def.g} to the Milnor tube $N(\e,\delta)$.

\begin{thm}\label{thm:eq.icp}
Suppose $f\colon(U,\0) \to (\R^p,0)$ satisfies the transversality property. Then $f$ is $d$-regular if and only if the map
\begin{equation}\label{eq:esfera}
\phi:=\frac{f}{\norm{f}}\colon \s_\e^{n-1} \setminus K \to \s^{p-1}
\end{equation}
is a smooth fibre bundle which is equivalent to the fibre bundle \eqref{eq:t.tubo}.
\end{thm}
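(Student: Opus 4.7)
The plan is to handle the two implications separately.

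For the direction \emph{``$\phi$ is a fibre bundle equivalent to \eqref{eq:t.tubo}'' $\Rightarrow$ ``$f$ is $d$-regular''}: if $\phi$ is a smooth fibre bundle then its projection is in particular a smooth submersion on $\s_\e^{n-1}\setminus K$, and the equivalence of \eqref{it:d-r} and \eqref{it:phi-sub} in Proposition~\ref{prop:equiv} immediately gives that $f$ is $d$-regular at $\0$. No further work is needed here.

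For the converse, I would exploit the vector field $\tilde{w}$ produced by Theorem~\ref{lem:uni.con.str.0}. This $\tilde{w}$ is defined on $\mathring{\B}_\e^n\setminus V$, is transverse to every sphere centred at $\0$, transverse to every Milnor tube, and tangent to every $E_\ell$. Since $E_\ell^{\pm}=\Phi^{-1}(\pm y_\ell)$ with $\Phi=f/\norm{f}$, this last property is equivalent to $\Phi$ being constant along every integral curve of $\tilde{w}$. Let $\psi_t$ denote the flow. For $x\in N(\e,\delta)$ with $\norm{x}<\e$, radiality of $\tilde{w}$ forces $t\mapsto \norm{\psi_t(x)}$ to be strictly increasing, so the forward orbit meets $\s_\e^{n-1}$ at a unique time $T(x)>0$; set $H(x)=\psi_{T(x)}(x)$. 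Smoothness of $T$ follows from the implicit function theorem using transversality of $\tilde{w}$ to $\s_\e^{n-1}$, and the combination of transversality to the tubes together with $\Phi$-invariance under the flow forces $H$ to be a diffeomorphism from $N(\e,\delta)$ onto $\s_\e^{n-1}\cap f^{-1}(\R^p\setminus\mathring{\B}_\delta^p)$ satisfying $\phi\circ H=\tilde{f}$. This produces a fibre-bundle equivalence between $\tilde{f}$ and the restriction of $\phi$ to $\s_\e^{n-1}\setminus\mathring{U}$, where $U:=\s_\e^{n-1}\cap f^{-1}(\mathring{\B}_\delta^p)$.

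The main obstacle I anticipate is promoting this to an equivalence over all of $\s_\e^{n-1}\setminus K$, i.e.\ accounting for the collar $U\setminus K=\s_\e^{n-1}\cap f^{-1}(\mathring{\B}_\delta^p\setminus\{0\})$ around the link. To handle it, I would argue that the map $(\phi,\norm{f})\colon \overline{U}\setminus K\to\s^{p-1}\times(0,\delta]$ is a proper submersion: properness follows from compactness of $\s_\e^{n-1}$ away from $K$, while the submersion property follows from $d$-regularity (making $\phi$ a submersion by Proposition~\ref{prop:equiv}) combined with the transversality hypothesis (making $\norm{f}$ a submersion on $\overline{U}\setminus K$). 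Ehresmann's theorem with boundary then trivialises $U\setminus K$ as $\partial U\times(0,\delta]$ compatibly over $\s^{p-1}$, and this collar trivialisation glues to $H$ to produce the desired global fibre-bundle equivalence $N(\e,\delta)\cong\s_\e^{n-1}\setminus K$ over $\s^{p-1}$; as a by-product, this exhibits $\phi$ itself as a smooth fibre bundle. The delicate step is arranging the gluing so that it respects the bundle structure globally rather than only locally, which requires choosing the collar so that the trivialisation of $\partial U$ induced by $H$ extends consistently over the collar.
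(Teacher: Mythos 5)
Your argument is essentially the paper's own proof: the converse direction via Proposition~\ref{prop:equiv}(\ref{it:phi-sub}), the flow of the vector field $\tilde{w}$ from Theorem~\ref{lem:uni.con.str.0} carrying $N(\e,\delta)$ diffeomorphically onto $\s_\e^{n-1}\setminus f^{-1}(\mathring{\B}_\delta^p)$ while preserving $\Phi$ (tangency to the $E_\ell$), and then Ehresmann plus gluing on the collar around the link; your map $(\phi,\norm{f})$ is exactly the paper's proper submersion $f\colon\s_\e^{n-1}\cap f^{-1}(\B_\delta^p\setminus\{0\})\to\B_\delta^p\setminus\{0\}$ written in polar coordinates, and the paper likewise composes with the radial projection $\pi$ to land in $\s^{p-1}$. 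One small repair to your justification: $(\phi,\norm{f})$ being a submersion does \emph{not} follow from $\phi$ and $\norm{f}$ being submersions separately; it follows because $(\phi,\norm{f})$ is $f$ followed by the polar-coordinate diffeomorphism of $\B_\delta^p\setminus\{0\}$, and the transversality property already makes $f$ a submersion on $\s_\e^{n-1}\cap f^{-1}(\B_\delta^p\setminus\{0\})$, so $d$-regularity is not needed on this piece. The gluing subtlety you flag at the end is real but standard (trivialize each piece over a contractible neighbourhood in $\s^{p-1}$ and match the two trivializations along the common boundary $f^{-1}(\s_\delta^{p-1})\cap\s_\e^{n-1}$); the paper simply asserts the gluing.
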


\begin{proof}
Since $f$ has the transversality property, by Proposition~\ref{prop:FTubo} we have the fibration \eqref{eq:t.tubo} on the tube. If $f$ is $d$-regular, by Theorem~\ref{lem:uni.con.str.0}
there exists a vector field $\tilde{w}$ satisfying properties \eqref{it:tr.sph}, \eqref{it:tr.tube} and \eqref{it:cons.arg}. The flow associated to $\tilde{w}$ defines
a diffeomorphism $\tau$ between the tube $N(\e,\delta)$ and $\s_\e^{n-1}\setminus f^{-1}(\mathring{\B}_\delta)$, where $\mathring{\B}_\delta$ is the open ball in $\R^p$ centred at $0$ of radius $\delta$,
``inflating'' the tube to the sphere in the following way: for a point $x\in N(\e,\delta)$ follow the integral curve of $\tilde{w}$ which passes through $x$ 
until it reaches $\s_\e^{n-1}$ at some point $\hat{x}$, which exists and it is unique because $\tilde{w}$ satisfies properties \eqref{it:tr.sph} and \eqref{it:tr.tube}. Define $\tau(x)=\hat{x}$. 
By property \eqref{it:cons.arg} the integral curves of $\tilde{w}$
lie on the $E_\ell$, so we have that $\tilde{f}(x)=\Phi(x)=\Phi(\hat{x})=\phi(\hat{x})$. Hence the diffeomorphism $\tau$ gives an equivalence of fibre bundles $\tilde{f}=\phi\circ\tau$
\begin{equation*}
\begin{tikzcd}[ampersand replacement=\&]
N(\e,\delta)\arrow[rr,"\tau"]\arrow[rd,"\tilde{f}"'] \& \& \s_\e\setminus f^{-1}(\mathring{\B}_\delta)\arrow[ld,"\phi"]\\
\& \s^{p-1} \&
\end{tikzcd} 
\end{equation*}
Now we extend the fibre bundle 
\begin{equation}\label{eq:tubo.inflado}
 \phi\colon\s_\e\setminus f^{-1}(\mathring{\B}_\delta)\to\s^{p-1}
\end{equation}
to $\s_\e\setminus K$ as follows. Since $\s_\e\cap f^{-1}(\B_\delta^p)$ is compact, the restriction of $f$
\begin{equation*}
f\colon \s_\e\cap f^{-1}(\B_\delta^p\setminus\{0\})\to \B_\delta^p\setminus\{0\}
\end{equation*}
is a proper submersion, and by Ehresmann Fibration Theorem, it is a smooth fibre bundle. Composing it with $\pi$, we get a fibre bundle given by the restriction of $\phi$
\begin{equation}\label{eq:tapa}
\phi|=\pi\circ f\colon \s_\e\cap f^{-1}(\B_\delta^p\setminus\{0\})\to\s^{p-1},
\end{equation}
since the composition of two smooth fibre bundles is again a smooth fibre bundle (see \cite[Lemma~A.1]{Cisneros-etal:FTDRDMGNICV}, \cite[Corollary~7]{Mckay:MCC} or  \cite{Ekedahl:CBPBP}).
We can glue the fibre bundles \eqref{eq:tubo.inflado} and \eqref{eq:tapa} since they coincide in the intersection and both have projection $\phi$, to get the smooth fibre bundle \eqref{eq:esfera}.

On the other hand, if \eqref{eq:esfera} is a smooth fibre bundle, then it is a submersion, and by Proposition~\ref{prop:equiv} $f$ is $d$-regular.
\end{proof}

\section{Real analytic maps with linear discriminant}\label{sec:lin.disc}

In this section, we see that Theorem~\ref{lem:uni.con.str.0} also holds for analytic maps with linear discriminant and we use it to 
prove the equivalence of the fibration on the tube and the fibration on the sphere for real analytic maps with arbitrary linear discriminant (Theorem~\ref{thm:eq.fib.ald} below). 
This is done in Subsection~\ref{ssec:ld}. 

\subsection{Equivalence of fibrations for maps with arbitrary linear discriminant}\label{ssec:ld}

We start recalling some definitions and results from \cite{Cisneros-etal:FTDRDMGNICV}. 
Let $f\colon (\R^n,0) \to (\R^p,0)$ be an analytic map with a critical point at $0$. Assume that $f$ is locally surjective (see Remark \ref{rem:surjective}). 
Equip $\R^n$ with a Whitney stratification adapted to $V=f^{-1}(0)$, and let $\B_{\e_0}^n$ be a closed  ball in $\R^n$, centred at $\0$, of sufficiently small radius $\e_0>0$, so that every sphere
in this ball, centred at $\0$, meets transversely every stratum of $V$.
In what follows, for $0<\e<\e_0$ we shall consider the restriction $f_{\e}$ of $f$ to the closed ball $\B_{\e}^n\subset\B_{\e_0}^n$.
Denote by $\Sigma_{\e}$ the critical set of $f_\e$ and by $\Delta_\e := f_\e(\Sigma_{\e})$ the \textit{discriminant} of $f_{\e}$.

\begin{rem}\label{rem:nice.germ}
\begin{itemize}
\item[$(i)$] The discriminant $\Delta_\e$ is a subanalitic set 
and it may depend on the choice of the radius $\e$, as showed in \cite{dosSantos-etal:FHSMG}. 
\item[$(ii)$] The results of this article extend to map-germs $f\colon (\R^n,0) \to (\R^p,0)$ if we consider $f$ in the class of \textit{nice analytic map-germs} defined in \cite[Definition~2.2]{dosSantos-etal:FHSMG}
for which the discriminant is a well-defined set-germ at $0$, so it does not depend on the radius $\e$.
\end{itemize}
\end{rem}

Let $0<\e<\e_0$, we say that the ball $\B_\e^n$ has the transversality property if there exist $0<\delta\ll\e$ such that for every $y\in\B_\delta^p\setminus\Delta_\e$ 
the fibre $f^{-1}(y)$ is transverse to the sphere $\s_\e$.
Since we are assuming $f$ locally surjective we can take $\B_\delta^p\subset\Ima(f_\e)$.

\begin{rem}
If we work with a nice analytic map-germ $f\colon(U,\0) \to (\R^p,0)$ (see Remark~\ref{rem:nice.germ}), the discriminant does not depend on the radius $\e$ and we just denote it by $\Delta$. 
In this case we need the following stronger \textit{transversality property}.
We say that $f$ satisfies the \textit{transversality property} if for every $0<\e<\e_0$ 
there exists a real number $\delta=\delta(\e)$ such that $0<\delta\ll\e$ and for every $y\in\B_\delta\setminus\Delta$ the fibre $f^{-1}(y)$ meets $\s_{\e}^{n-1}$ transversely.
\end{rem}

\begin{rem}
In the case that $f$ is not locally surjective for the transversality property we need to ask that every $y\in(\B_\delta^p\setminus\Delta_\e)\cap\Ima(f_\e)$ 
the fibre $f^{-1}(y)$ meets $\s_{\e}^{n-1}$ transversely.
But if we consider that an empty fibre intersects transversely the sphere $\s_{\e}^{n-1}$ we can state in general the transversality property as above.
\end{rem}

Suppose that $f$ has the transversality property, then Proposition~\ref{prop:FTubo} generalizes so that the restriction
\begin{equation}
 f_\e|\colon \B_\e^n \cap f^{-1}(\s_\delta^{p-1} \setminus \Delta_\e) \to \s_\delta^{p-1} \setminus \Delta_\e \label{eq:MT}
\end{equation}
is a smooth fibre bundle \cite[Theorem~2.7]{Cisneros-etal:FTDRDMGNICV}.

Let $0<\e<\e_0$, we say that $f$ has \textit{linear discriminant in the ball $\mathbb{B}_\e^n$}  if $\Delta_\e$ is a union of line-segments with one endpoint at $0 \in \mathbb{R}^p$. 
We say that $\eta>0$ is a \textit{linearity radius} for $\Delta_\e$ if each of these line-segments intersect $\s_\eta^{p-1}$, that is, if
\begin{align*}
\Delta_\e \cap \B_\eta^p = \Cone \big( \Delta_\e \cap \s_\eta^{p-1} \big) \, .
\end{align*}
The case when $f$ has $0\in\R^p$ as isolated critical value is considered to have linear discriminant with $\Delta_\e \cap \s_\eta^{p-1}=\emptyset$.

Let $f\colon (\B_{\e_0}^n,0) \to (\R^p,0)$ be an analytic map with linear discriminant in the ball $\mathbb{B}_\e^n$ and consider a linearity radius $\eta>0$ for $f$. Set:
\begin{equation*}
\mathcal{A}_{\eta} := \Delta_\e \cap \s_\eta^p.
\end{equation*}
Let $\pi_\eta\colon \s_{\eta}^p\to\s^{p-1}$ be the radial projection onto the unit sphere $\s^{p-1}$ and set $\mathcal{A}=\pi_\eta(\mathcal{A}_\eta)$.
For each point $\vartheta \in \s_\eta^{p-1}$, let $\mathcal{L}_\vartheta \subset \R^p$ be the open ray in $\R^p$ from the origin that contains the point $\vartheta$.
Set:
\begin{equation*}
 E_{\vartheta} := f_\e^{-1}(\mathcal{L}_\vartheta).
\end{equation*}
Notice that $E_{\vartheta}$ is a smooth manifold in $\B_\e^n$, for any $\vartheta$ in $\s_\eta^{p-1} \setminus \mathcal{A}_\eta$. 

We say that $f$ is \textit{$d$-regular (in the ball $\B_\e^n$)} if $E_{\vartheta}$ intersects the sphere $\s_{\e'}^{n-1}$ transversely in $\R^n$, for every $\e'$ with $0<\e' \leq \e$ and for every 
$\vartheta \in \s_\eta^{p-1} \setminus \mathcal{A}_\eta $.

\begin{rem}
In the case that  $f$ has an isolated critical value at $0\in\R^p$, $f$ is $d$-regular (in the definition given in Section~\ref{sec:d-reg}) if and only if there exists $0<\e<\e_0$ such that
$f$ is $d$-regular in the ball $\B_\e^n$.
\end{rem}

\begin{rem}\label{rem:delta=eta}
Let $f\colon (\B_{\e_0}^n,0) \to (\R^p,0)$ with $n\geq p\geq 2$ be a locally surjective analytic map.
Let $0<\e<\e_0$ and suppose the ball $\mathbb{B}_\e^n$ has the transversality property and $f$ has linear discriminant in $\mathbb{B}_\e^n$.
Notice that in this case we can take the linearity radius to be the $\delta$ in the definition of the transversality property.
\end{rem}

The following examples are also from \cite{Cisneros-etal:FTDRDMGNICV}.
\begin{examp}
The real analytic map $f\colon (\R^4,0) \to (\R^3,0)$
\begin{equation*}
f(x,y,z,w) := (x^2-y^2z, y, w) \, .
\end{equation*}
has the plane $\{x=y=0\}$ in $\R^4$ as its critical set, its discriminant is the axis $\{u_1 = u_2 =0\}$ in $\R^3$, so it is linear. One can check that $f$ is \textit{not} $d$-regular. 
\end{examp}

\begin{examp}
Let $\K$ be either $\R$ or $\C$. 
Let $(f,g): \K^n \to \K^2$ be a $\K$-analytic map of the form:
\begin{equation*}
(f,g) = \left( \sum_{i=1}^n a_i x_i^p \, , \, \sum_{i=1}^n b_i x_i^p \right) \, ,
\end{equation*}
where $p \geq 2$ is an integer and $a_i, b_i \in \K$ are constants in generic position, i.~e., the origin is in the convex hull of the points $(a_i,b_i)$ (which guarantees that the link of $V(f)$ is non-empty) and 
no two of the points $(a_i,b_i)$ are linearly dependent, that is, $a_i b_j \neq a_j b_i$, for any $i \neq j$ ({\it Weak Hyperbolicity Hypothesis}).
Its critical set $\Sigma$ is given by the coordinate axis of $\K^n$, the discriminant $\Delta$ is linear and it is $d$-regular (see \cite[Examples~3.6 and 3.7]{Cisneros-etal:FTDRDMGNICV}).
For $p=2$ this maps were studied by S.~L\'opez de Medrano in \cite{LopezdeMedrano:SHQM}.
\end{examp}

Let $0<\e<\e_0$ be such that $f$ has linear discriminant and it is $d$-regular in the ball $\B_\e^n$.
Consider the maps $\Phi\colon \B_\e^n\setminus f^{-1}(\Delta_\e)\to \s^{p-1} \setminus \mathcal{A}$ and 
$\mathfrak{F}\colon \B_\e^n\setminus f^{-1}(\Delta_\e)\to\R^p\setminus\Delta_\e$ given in
\eqref{eq:def.g} but restricted to $\B_\e^n\setminus f^{-1}(\Delta_\e)$.
Hence Proposition~\ref{prop:equiv} generalizes in a straightforward way for maps with linear discriminant, removing $f^{-1}(\Delta_\e)$ from the domain 
of the maps involved instead of only removing $V$ \cite[Proposition~3.8]{Cisneros-etal:FTDRDMGNICV}.
Thus, the maps $\mathfrak{F}$ and $\phi=\Phi|\colon  \s_\e^{n-1} \setminus f^{-1}(\Delta_\e) \to \s^{p-1} \setminus \mathcal{A}$ are submersions if and only if $f$ is $d$-regular. 
In fact, we have the following theorem.

\begin{thm}[{\cite[Theorem~3.9]{Cisneros-etal:FTDRDMGNICV}}]\label{prop_2}
Let $f\colon (\B_{\e_0}^n,0) \to (\R^p,0)$ with $n\geq p\geq 2$ be a locally surjective analytic map.
Let $0<\e<\e_0$ and suppose the ball $\mathbb{B}_\e^n$ has the transversality property and $f$ has linear discriminant in $\mathbb{B}_\e^n$.
If $f$ is $d$-regular then the restriction of $\Phi$ given by
\begin{equation}\label{eq:FS}
 \phi=\Phi|\colon  \s_\e^{n-1} \setminus f^{-1}(\Delta) \to \s^{p-1} \setminus \mathcal{A}
\end{equation}
is a smooth locally trivial fibration.
\end{thm}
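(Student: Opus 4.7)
The plan is to mirror the proof of Theorem~\ref{thm:eq.icp} almost verbatim, replacing the role of $V=f^{-1}(0)$ by $f^{-1}(\Delta_\e)$ and exploiting the generality noted in Remark~\ref{rem:gen.const}. The transversality hypothesis gives, by \cite[Theorem~2.7]{Cisneros-etal:FTDRDMGNICV}, the fibre bundle on the tube
\begin{equation*}
f_\e|\colon \B_\e^n \cap f^{-1}(\s_\delta^{p-1} \setminus \mathcal{A}_\delta) \to \s_\delta^{p-1} \setminus \mathcal{A}_\delta,
\end{equation*}
and composing with the radial diffeomorphism $\pi\colon \s_\delta^{p-1}\setminus\mathcal{A}_\delta \to \s^{p-1}\setminus\mathcal{A}$ yields an equivalent bundle with projection $\tilde{f}=\pi\circ f$, which is just the restriction of $\Phi$ to the tube. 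So the task reduces to inflating this tube onto $\s_\e^{n-1}\setminus f^{-1}(\Delta_\e)$ in a way compatible with $\Phi$.

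Next I would invoke Remark~\ref{rem:gen.const}: by the generalized Proposition~\ref{prop:equiv} for linearly discriminated maps, $d$-regularity implies that both $f$ and $\mathfrak{F}$ are submersions on the open set $\mathcal{U}=\mathring{\B}_\e^n\setminus f^{-1}(\Delta_\e)$. Hence the construction of Subsection~\ref{ssec:construction} produces an analytic vector field $\tilde{w}$ on $\mathcal{U}$ that is transverse to the spheres centred at $0$, transverse to the tubes $f^{-1}(\s_{\delta'}^{p-1}\setminus\mathcal{A}_{\delta'})$, and tangent to each $E_\vartheta$. Following the integral curves of $\tilde{w}$ outward from a point $x$ in $\B_\e^n\cap f^{-1}(\s_\delta^{p-1}\setminus\mathcal{A}_\delta)$ until the sphere $\s_\e^{n-1}$ is reached, one obtains a diffeomorphism
\begin{equation*}
\tau\colon \B_\e^n \cap f^{-1}(\s_\delta^{p-1}\setminus\mathcal{A}_\delta) \longrightarrow \s_\e^{n-1}\setminus f^{-1}(\mathring{\B}_\delta^p),
\end{equation*}
and because trajectories stay in the $E_\vartheta$, the projection $\Phi$ is constant along them, so $\tilde{f}=\phi\circ\tau$. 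This shows that $\phi$ is a locally trivial fibration on $\s_\e^{n-1}\setminus f^{-1}(\mathring{\B}_\delta^p)$.

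To extend the fibration to all of $\s_\e^{n-1}\setminus f^{-1}(\Delta_\e)$, I would argue as in Theorem~\ref{thm:eq.icp}: since $\s_\e^{n-1}\cap f^{-1}(\overline{\B}_\delta^p\setminus\Delta_\e)$ is relatively compact in $\s_\e^{n-1}\setminus f^{-1}(\Delta_\e)$ and the fibres are transverse to $\s_\e^{n-1}$ (by the transversality property applied off the discriminant), the restriction
\begin{equation*}
f\colon \s_\e^{n-1}\cap f^{-1}(\B_\delta^p\setminus\Delta_\e) \to \B_\delta^p\setminus\Delta_\e
\end{equation*}
is a proper submersion onto an open set, hence a smooth fibre bundle by Ehresmann. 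Composing with the radial projection $\pi$ gives a fibre bundle with projection $\phi$ over $\s^{p-1}\setminus\mathcal{A}$, which agrees with the ``inflated'' bundle on the overlap and has the same projection $\phi$, so the two can be glued (as in \cite[Lemma~A.1]{Cisneros-etal:FTDRDMGNICV}) to yield the global smooth fibration \eqref{eq:FS}.

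The main obstacle I anticipate is checking that Remark~\ref{rem:gen.const} really applies verbatim with $\Delta_\e$ in place of $\{0\}$: one must verify that $d$-regularity in the linear-discriminant sense still gives $\mathfrak{F}$ as a submersion on $\mathcal{U}$, and that the local construction of $w_f,w_\mathfrak{F}$ and the proof of Proposition~\ref{prop:keyprop} go through when the target is $\R^p\setminus\Delta_\e$ rather than $\R^p\setminus\{0\}$. Since the linearity of $\Delta_\e$ means the open rays $\mathcal{L}_\vartheta$ are either entirely in $\Delta_\e$ or entirely outside, each $E_\vartheta$ with $\vartheta\in\s_\eta^{p-1}\setminus\mathcal{A}_\eta$ is a smooth submanifold in $\mathcal{U}$, and the normal spaces $P_x$ at points $x\in\mathcal{U}$ behave exactly as in Section~3; so all the arguments of Lemma~\ref{lem:uni.con.str.1} and Theorem~\ref{lem:uni.con.str.0} carry over without change.
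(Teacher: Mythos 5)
Your proof is correct and is essentially the paper's own route: the statement itself is imported from the cited reference, but the paper's proof of Theorem~\ref{thm:eq.icp} combined with Remark~\ref{rem:gen.const} is precisely the inflation-plus-gluing argument you give, and the paper invokes exactly this generalization for Theorem~\ref{thm:eq.fib.ald}. The only slip is that $\s_\e^{n-1}\cap f^{-1}(\B_\delta^p\setminus\Delta_\e)$ need not be relatively compact in $\s_\e^{n-1}\setminus f^{-1}(\Delta_\e)$; the properness you need follows instead from the fact that the preimage under $f|_{\s_\e^{n-1}}$ of any compact subset of $\B_\delta^p\setminus\Delta_\e$ is closed in the compact sphere $\s_\e^{n-1}$, hence compact, which is all that Ehresmann's theorem requires.
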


The construction of the vector field $\tilde{w}$ of Theorem \ref{lem:uni.con.str.0} works in any open set of $\R^n$ where $f$ and the spherefication $\mathfrak{F}$ are
submersions (see Remark~\ref{rem:gen.const}), so in Subsection~\ref{ssec:construction} we can replace $\mathring{\B}_{\e}^n\setminus V$ by $\mathring{\B}_{\e}^n\setminus f^{-1}(\Delta_\e)$. 
Using this generalization of Theorem~\ref{lem:uni.con.str.0} we 
prove the equivalence of fibrations \eqref{eq:MT} and \eqref{eq:FS} for real analytic maps with arbitrary linear discriminant as in Theorem~\ref{thm:eq.icp}.

\begin{thm}\label{thm:eq.fib.ald}
Let $f\colon (\B_{\e_0}^n,0) \to (\R^p,0)$ with $n\geq p\geq 2$ be a locally surjective analytic map.
Let $0<\e<\e_0$ and suppose the ball $\mathbb{B}_\e^n$ has the transversality property, $f$ has linear discriminant and it is $d$-regular in $\mathbb{B}_\e^n$.
Then the fibre bundles:
\begin{gather*}
\tilde{f}:=\pi_\delta\circ f_\e|\colon \B_\e^n \cap f^{-1}(\s_\delta^{p-1} \setminus \mathcal{A}_\delta) \to \s^{p-1} \setminus \mathcal{A} \\
\intertext{and}
\phi\colon \s_\e^{n-1} \setminus f^{-1}(\Delta) \to \s^{p-1} \setminus \mathcal{A}
\end{gather*}
are equivalent, where $\pi_\delta\colon \s_{\delta}^{p-1} \to \s^{p-1}$ is the radial projection.
\end{thm}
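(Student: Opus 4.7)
The plan is to mirror the argument of Theorem~\ref{thm:eq.icp}, replacing the isolated critical value hypothesis with the linear discriminant hypothesis wherever needed. The essential enabling fact is Remark~\ref{rem:gen.const}: the construction of Theorem~\ref{lem:uni.con.str.0} produces the vector field $\tilde{w}$ on any open set where both $f$ and its spherefication $\mathfrak{F}$ are submersions. On the open set $\mathcal{U} := \mathring{\B}_\e^n \setminus f^{-1}(\Delta_\e)$ this holds, because $\Sigma_\e \subset f^{-1}(\Delta_\e)$ and because $d$-regularity together with the extended version of Proposition~\ref{prop:equiv} from \cite[Proposition~3.8]{Cisneros-etal:FTDRDMGNICV} shows $\mathfrak{F}$ is a submersion on $\mathcal{U}$. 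The resulting $\tilde{w}$ is radial, transverse to each Milnor tube $f^{-1}(\s_t^{p-1} \setminus \mathcal{A}_t)$ for $0 < t \leq \delta$, and tangent to every $E_\vartheta$ with $\vartheta \in \s_\eta^{p-1} \setminus \mathcal{A}_\eta$ (choosing $\eta = \delta$ as per Remark~\ref{rem:delta=eta}).

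Next, I would use the flow of $\tilde{w}$ to define a diffeomorphism $\tau$ from the tube $\B_\e^n \cap f^{-1}(\s_\delta^{p-1} \setminus \mathcal{A}_\delta)$ onto the subset $\s_\e^{n-1} \cap f^{-1}(\R^p \setminus (\mathring{\B}_\delta^p \cup \Delta_\e))$ of the sphere: transversality to spheres guarantees each integral curve starting in the tube reaches $\s_\e^{n-1}$, transversality to tubes together with monotonicity of $\|x\|$ and $\|f\|$ along the flow gives uniqueness and injectivity, and tangency to $E_\vartheta$ ensures integral curves stay in $\mathcal{U}$ (never entering $f^{-1}(\Delta_\e)$). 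Because integral curves lie in a single $E_\vartheta$, the map $\Phi = f/\|f\|$ is constant along them, so $\phi \circ \tau = \tilde{f}$, which is the desired bundle equivalence over the ``equatorial'' portion of the sphere.

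To finish, one must extend this fibration across the complementary cap $\s_\e^{n-1} \cap f^{-1}(\B_\delta^p \setminus \Delta_\e)$. The restriction $f \colon \s_\e^{n-1} \cap f^{-1}(\B_\delta^p \setminus \Delta_\e) \to \B_\delta^p \setminus \Delta_\e$ is a submersion by the transversality property of $\B_\e^n$, and it is proper because $\s_\e^{n-1} \cap f^{-1}(\B_\delta^p)$ is compact. Hence Ehresmann's theorem yields a smooth fibre bundle, and post-composing with the radial projection $\pi \colon \B_\delta^p \setminus \Delta_\e \to \s^{p-1} \setminus \mathcal{A}$ (a trivial bundle over $\s^{p-1} \setminus \mathcal{A}$), via the composition-of-fibre-bundles result \cite[Lemma~A.1]{Cisneros-etal:FTDRDMGNICV}, gives a fibre bundle whose projection coincides with $\phi$. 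Gluing this cap bundle to the inflated tube along their common ``equator'' $\|f\|=\delta$ (where both projections equal $\tilde{f}$) produces the global fibration $\phi$ equivalent to $\tilde{f}$.

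The main obstacle is analogous to that of Theorem~\ref{thm:eq.icp}: verifying that $\tau$ is genuinely a diffeomorphism onto the claimed annular region. Finiteness of the flow time, uniqueness of the hit point on $\s_\e^{n-1}$, and surjectivity onto the target must be established using the two transversality properties, the compactness of $\overline{\B_\e^n}$, and the monotonicity of $\|x\|$ along flow lines. The linear discriminant hypothesis enters at exactly one place, namely in ensuring that the flow remains inside $\mathcal{U}$; this works because $\tilde{w}$ is tangent to each $E_\vartheta$ with $\vartheta \notin \mathcal{A}_\eta$, and these $E_\vartheta$ are by definition disjoint from $f^{-1}(\Delta_\e)$.
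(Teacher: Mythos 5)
Your proposal is correct and follows essentially the same route as the paper, which itself only sketches this proof by invoking Remark~\ref{rem:gen.const} to run the construction of Theorem~\ref{lem:uni.con.str.0} on $\mathring{\B}_\e^n\setminus f^{-1}(\Delta_\e)$ and then repeating the inflate--cap--glue argument of Theorem~\ref{thm:eq.icp} verbatim. Your write-up supplies exactly the details the paper leaves implicit (in particular, that tangency to the $E_\vartheta$ with $\vartheta\notin\mathcal{A}_\eta$ keeps the flow away from $f^{-1}(\Delta_\e)$), with no gaps.
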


\begin{rem}
In Theorem~\ref{thm:eq.fib.ald} we take the linearity radius to be $\delta$ as in Remark~\ref{rem:delta=eta}.
\end{rem}

\begin{rem}
In the PhD thesis \cite{Ribeiro} and in the articles \cite{dosSantos-etal:FHSMG,dosSantos-etal:MHSFEP} the authors study nice real analytic maps-germs $f\colon (\R^n,0) \to (\R^p,0)$ 
with discriminant of positive dimension obtaining some results in common with \cite{Cisneros-etal:FTDRDMGNICV}.
They  prove the existence of the fibration on the tube \eqref{eq:MT} if $f$ satisfies a condition equivalent to the transversality property \cite[Lemma~3.3]{dosSantos-etal:FHSMG}.
They also consider real analytic map-germs with linear discriminant (they call it with radial discriminant) and if they have fibration on the tube and are $d$-regular (they call it $\rho$-regular)
then the fibration on the sphere \eqref{eq:FS} exists. Then they consider the problem of finding conditions for which the two fibrations are equivalent \cite[Problem~4.1]{dosSantos-etal:MHSFEP}
and prove that the problem reduces to find a vector field which satisfies properties \eqref{it:pr1}, \eqref{it:pr2} and \eqref{it:pr3} 
(which they call a \textit{good vector field} or \textit{Milnor vector field}).
The authors conjecture that if $f$ is a real analytic map which is $d$-regular and for which both fibrations \eqref{eq:MT} and \eqref{eq:FS} exists, 
then they are equivalent \cite[Equivalence Conjecture~4.4]{dosSantos-etal:MHSFEP}. Theorem~\ref{thm:eq.fib.ald} above proves that the conjecture is true.
\end{rem}

\begin{rem}
In \cite{Ribeiro,dosSantos-etal:MHSFEP} the authors 
prove that for fixed $\vartheta \in \s_\eta^{p-1} \setminus \mathcal{A}_\eta$ 
the corresponding fibres of fibrations \eqref{eq:MT} and \eqref{eq:FS} are diffeomorphic but without being able to prove the equivalence of fibrations since they could not find a 
radius $\e_1$ which works for \textit{every} $\vartheta\in\s_\eta^{p-1} \setminus \mathcal{A}_\eta$.
\end{rem}

\section*{Acknowledgments}

The first author was supported by CONACYT~253506 and a UNAM-DGAPA-PASPA sabbatical scholarship. He wants to thank Prof.~Hans Brodersen for the helpful email correspondence.
Both authors want to thank José Seade and Jawad Snoussi for many enlightening conversations.

\end{document}